\newcommand{\angles}[1]{\langle #1 \rangle}
\newcommand{\half}{\frac{1}{2}}
\newcommand{\R}{\mathbb{R}}
\begin{document} 
\newtheorem{prop}{Proposition}[section]
\newtheorem{Def}{Definition}[section] \newtheorem{theorem}{Theorem}[section]
\newtheorem{lemma}{Lemma}[section] \newtheorem{Cor}{Corollary}[section]

\title[Maxwell-Chern-Simons-Higgs in Lorenz gauge]{\bf Local well-posedness for the Maxwell-Chern-Simons-Higgs system in Fourier-Lebesgue spaces}
\author[Hartmut Pecher]{
{\bf Hartmut Pecher}\\
Fachbereich Mathematik und Naturwissenschaften\\
Bergische Universit\"at Wuppertal\\
Gau{\ss}str.  20\\
42119 Wuppertal\\
Germany\\
e-mail {\tt pecher@uni-wuppertal.de}}
\date{}

\begin{abstract}
We consider local well-posedness for the Maxwell-Chern-Simons-Higgs system in Lorenz gauge for data with minimal regularity assumptions in Fourier-Lebesgue spaces $\widehat{H}^{s,r}$ , where $\|u\|_{\widehat{H}^{s,r}} := \| \langle \xi \rangle^s \widehat{u}(\xi)\|_{L^{r'}}$ , and $r$ and $r'$ are dual exponents.  We show that the gap between this regularity and the regularity with respect to scaling shrinks in the case $r>1$ , $r \to 1$ compared to the classical case $r=2$ .
\end{abstract}
\maketitle
\renewcommand{\thefootnote}{\fnsymbol{footnote}}
\footnotetext{\hspace{-1.5em}{\it 2020 Mathematics Subject Classification:} 
35Q40, 35L70 \\
{\it Key words and phrases:} Maxwell-Chern-Simons-Higgs,  
local well-posedness, Lorenz gauge}
\normalsize 
\setcounter{section}{0}
\section{Introduction and main results}
\noindent The Lagrangian of the (2+1)-dimensional Maxwell-Chern-Simons-Higgs model which was proposed in \cite{LLM} is given by
\begin{align*}
{\mathcal L}  = &-\frac{1}{4} F^{\mu \nu} F_{\mu \nu} + \frac{\kappa}{4} \epsilon^{\mu \nu \rho} F_{\mu \nu} A_{\rho} + D_{\mu}\phi\overline{D^{\mu} \phi} \\
& + \frac{1}{2} \partial_{\mu}N \partial^{\mu} N - \frac{1}{2}(e|\phi|^2+\kappa N - ev^2)^2 - e^2 N^2 |\phi|^2 
\end{align*}
in Minkowski space ${\mathbb R}^{1+2} = {\mathbb R}_t \times {\mathbb R}_x^2$ with metric $g_{\mu \nu} = diag(1,-1,-1)$. We use the convention that repeated upper and lower indices are summed, Greek indices run over 0,1,2 and Latin indices over 1,2. Here 
\begin{align*}
D_{\mu}  & := \partial_{\mu} - ieA_{\mu} \\
 F_{\mu \nu} & := \partial_{\mu} A_{\nu} - \partial_{\nu} A_{\mu} 
\end{align*}
Here $F_{\mu \nu} : {\mathbb R}^{1+2} \to {\mathbb R}$ denotes the curvature, $\phi : {\mathbb R}^{1+2} \to {\mathbb C}$ and $N: {\mathbb R}^{1+2} \to {\mathbb R}$ are scalar fields, and $A_{\mu} : {\mathbb R}^{1+2} \to {\mathbb R}$ are the gauge potentials. $e$ is the charge of the electron and $\kappa > 0$ the Chern-Simons constant, $v$ is a real constant. We use the notation $\partial_{\mu} = \frac{\partial}{\partial x_{\mu}}$, where we write $(x^0,x^1,...,x^n) = (t,x^1,...,x^n)$ and also $\partial_0 = \partial_t$. $\epsilon^{\mu \nu \rho}$ is the totally skew-symmetric tensor with $\epsilon^{012} = 1$.

The corresponding Euler-Lagrange equations are given by
\begin{align}
\label{2.1}
& \partial_{\lambda} F^{\lambda \rho} + \frac{\kappa}{2} \epsilon^{\mu \nu \rho} F_{\mu \nu} + 2e Im( \phi  \overline{D^{\rho} \phi})  = 0\\
\label{2.2}
&D_{\mu} D^{\mu} \phi  + U_{\overline{\phi}}(|\phi|^2,N) = 0 \\
\label{2.3}
& \partial_{\mu} \partial^{\mu} N + U_N (|\phi|^2,N) = 0 \, ,
\end{align}
where
\begin{align*}
U_{\overline{\phi}}(|\phi|^2,N) &= (e|\phi|^2+\kappa N -ev^2)\phi + e^2 N^2 \phi \\
U_N(|\phi|^2,N) & = \kappa(e|\phi|^2 + \kappa N -ev^2) + 2e^2N |\phi|^2 \, .
\end{align*}
(\ref{2.1}) can be written as follows
\begin{align}
\label{1.1*}
&-\Delta A_0 + \partial_t(\partial_1 A_1 +\partial_2 A_2) + \kappa F_{12} + 2e Im(\phi \overline{D^0 \phi})  = 0 \\
\label{1.2*}
&(\partial_t^2-\partial_2^2)A_1 - \partial_1(\partial_t A_0 -\partial_2 A_2) - \kappa F_{02} + 2e Im(\phi \overline{D^1 \phi}) = 0 \\
\label{1.3*}
&(\partial_t^2-\partial_1^2)A_2 - \partial_2(\partial_t A_0 -\partial_1 A_1) + \kappa F_{01} + 2e Im(\phi \overline{D^2 \phi}) = 0 \, .
\end{align}
The initial conditions are
\begin{align}
\label{IC}
&A_{\nu}(0) = a_{\nu 0} \quad , \quad (\partial_t A_{\nu})(0) = a_{\nu 1}  \quad , \quad \phi(0) = \phi_0 \quad , \quad (\partial_t \phi)(0) = \phi_1 \\ 
\nonumber
&N(0) = N_0 \quad , \quad (\partial_t N)(0) = N_1   \, .
\end{align}
The Gauss law constraint (\ref{1.1*}) requires the initial data to fulfill the following condition:
\begin{equation}
\Delta a_{00} - \partial_1 a_{11} - \partial_2 a_{21} -\kappa(\partial_1 a_{20} - \partial_2 a_{10}) - 2e Im(\phi_0 \overline{\phi}_1) +2e^2 a_{00}|\phi_0|^2 = 0 \,.
\end{equation} 
The energy $E(t)$ of (\ref{2.1}),(\ref{2.2}),(\ref{2.3}) is (formally) conserved, where
\begin{align*}
E(t) = &\int \frac{1}{2} \sum_i F_{0i}^2(x,t) + \frac{1}{2} F_{12}^2(x,t) \\&+  \sum_{\mu} |D_{\mu} \phi(x,t)|^2 + \sum_{\mu} |\partial_{\mu} N(x,t)|^2 + U(|\phi|^2,N)(x,t) dx
\end{align*}
with
$$ U(|\phi|^2,N) = \frac{1}{2}(e|\phi|^2+\kappa N -ev^2)^2 +e^2 N^2 |\phi|^2 \, . $$
There are two possible natural asymptotic conditions to make the energy finite: either the "nontopological" boundary condition
$ (\phi,N,A) \rightarrow (0,\frac{ev^2}{\kappa},0)$ as $|x| \to \infty $
or the "topological" boundary condition
$(|\phi|^2,N,A) \rightarrow (v^2,0,0)$ as $|x| \to \infty \, . $

We decide to study the "nontopological" boundary condition. Replacing $N$ by $N-\frac{ev^2}{\kappa}$ and denoting it again by $N$  we obtain $(\phi,N,A) \to (0,0,0)$ as $|x| \to \infty$, thus leading to solutions in standard Sobolev spaces, and in (\ref{2.2}),(\ref{2.3}) we now have
\begin{align}
\label{*}
U_{\overline{\phi}}(|\phi|^2,N) &= (e|\phi|^2+\kappa N)\phi + e^2(N+\frac{ev^2}{\kappa})^2 \phi \\
\label{****}
 U_N(|\phi|^2,N) &= \kappa (e|\phi|^2 + \kappa N) +2e^2(N+\frac{ev^2}{\kappa}) |\phi|^2 
\end{align}

For the "topological" boundary condition the problem can also be reduced to a system for $(\phi,N,A)$ which fulfills $(\phi,N,A) \to (0,0,0)$ as $|x| \to \infty$ for a modified function $\phi$, if one makes the assumption that $\phi \to \lambda \in{\mathbb C}$ as $|x| \to \infty$ with $|\lambda| = v$. In this case one simply replaces $\phi$ by $\phi - \lambda$. For details we refer to Yuan's paper \cite{Y}. 

The equations (\ref{2.1}),(\ref{2.2}),(\ref{2.3}) are invariant under the gauge transformations
$$ A_{\mu} \rightarrow A'_{\mu} = A_{\mu} + \partial_{\mu} \chi \, , \, \phi \rightarrow \phi' = \exp(ie\chi) \phi \, , \, D_{\mu} \rightarrow D'_{\mu} = \partial_{\mu}-ieA'_{\mu} \, . $$
We consider exclusively the Lorenz gauge
$ \partial^{\mu} A_{\mu} = 0 \, , $
so that we have to assume that the data fulfill
$ \partial^{\mu} a_{\mu} = 0 $ . 
 
We want to prove local well-posedness of the Cauchy problem for (\ref{2.1}),(\ref{2.2}),(\ref{2.3}) for data in Fourier-Lebesgue spaces $\widehat{H}^{s,r}$ for $1<r \le 2$ with minimal regularity assumptions. These spaces are defined by its norm $\|u\|_{\widehat{H}^{s,r}} = \| \langle \xi \rangle^s \widehat{u}(\xi)\|_{L^{r'}}$ for dual exponents $r$ and $r'$  .

Chae-Chae \cite{CC} assumed $(\phi_0,\phi_1) \in H^2 \times H^1$ and proved local and even global well-posedness using energy conservation. This was improved by J. Yuan \cite{Y} to $(\phi_0,\phi_1),(a_{\mu 0},a_{\mu 1}),(N_0,N_1) \in H^s \times H^{s-1}$ with $s > \frac{3}{4}$ , who obtained a local solution $\phi,A_{\mu},N \in C^0([0,T],H^s) \cap C^1([0,T],H^{s-1})$, which is unique in a suitable subset of $X^{s,b}$-type. Using energy conservation this solution exists globally, if $s\ge 1$.

In \cite{P}  we further lowered  down the regularity of the data to $(\phi_0,\phi_1) \in H^s \times H^{s-1}$,  $(a_{\mu 0},a_{\mu 1}) \in H^{2s-\frac{3}{4}-} \times H^{2s-\frac{7}{4}-}$ , $(N_0,N_1) \in H^{\frac{1}{2}} \times H^{-\frac{1}{2}}$ on condition that $ s > \frac{1}{2} $ . We obtain a local solution $\phi \in C^0([0,T],H^s) \cap C^1([0,T],H^{s-1})$, $A_{\mu} \in C^0([0,T],H^{2s-\frac{3}{4}-}) \cap C^1([0,T],H^{2s-\frac{7}{4}-}) $ , $N\in C^0([0,T],H^{\frac{1}{2}}) \cap C^1([0,T],H^{-\frac{1}{2}-})$,  which is unique in a suitable subspace of $X^{s,b}$-type.

Whereas Chae-Chae only used standard energy type estimates Yuan applied bilinear Strichartz type estimates which were given in the paper of d'Ancona, Foschi and Selberg \cite{AFS}. We also use this type of estimates but additionally take advantage of a crucial null condition of the term $A_{\mu} \partial^{\mu} \phi$ in the wave equation for $\phi$. This was detected by Klainerman-Machedon \cite{KM} and Selberg-Tesfahun \cite{ST} for the Maxwell-Klein-Gordon equations and also by Selberg-Tesfahun \cite{ST1} for the corresponding problem for the Chern-Simons-Higgs equations.  In chapter 3 we prove bilinear estimates for the null forms and for general bilinear terms in generalized Bourgain-Klainerman-Machedon spaces $H^r_{s,b}$ (and $X^r_{s,b,\pm}$) based on estimates by Foschi and Klainerman \cite{FK}, Gr\"unrock \cite{G}, Grigoryan-Nahmod \cite{GN} and Grigoryan-Tanguay \cite{GT}.

The critical space for the data  with respect to scaling is $\widehat{H}^{\frac{2}{r}-1,r}$ for $\phi_0$ , $a_{\mu 0}$ and $N_0$ (cf. Remark 1 below). Thus in the case $r=2$ there is still a gap, which amounts to $\half$ for $\phi_0$ and $N_0$ and $\frac{1}{4}$ for $a_{\mu 0}$ .  
In order to shrink this gap we leave the $L^2$-based Sobolev spaces for the data and consider instead data in Fourier-Lebesgue spaces $\widehat{H}^{s,r}$ which coincide with the classical Sobolev spaces $H^s$ for $r=2$. We are especially interested in the case $r >1$ , but close to 1. In this case the minimal regularity assumptions for the data $(\phi(0),(\partial_t \phi)(0)) \in \widehat{H}^{s,r} \times \widehat{H}^{s-1,r}$, $(A_{\nu}(0),(\partial_t A_{\nu})(0)) \in \widehat{H}^{l,r} \times \widehat{H}^{l-1,r}$ and $(N(0),(\partial_t N)(0)) \in \widehat{H}^{m,r} \times \widehat{H}^{m-1,r}$ are $(s,l,m)=( \frac{21}{16}+,\frac{9}{8}+,\frac{9}{8}+)$ (cf. Theorem \ref{Theorem1}). Thus the gap for $(s,l,m)$ shrinks to $(\frac{5}{16},\frac{1}{8},\frac{1}{8})$ .

We now formulate our main results. One easily checks that a solution of (\ref{2.1}),(\ref{2.2}),(\ref{2.3}) (with (\ref{*}),(\ref{****})) under the Lorenz condition
\begin{equation}
\label{1.4*}
\partial^{\mu} A_{\mu} = 0
\end{equation}
also fulfills the following system
\begin{align}
\label{1.1}
&(\square +1)A_0 = -\kappa F_{12} -2e Im(\phi \overline{D_0 \phi}) +A_0 \\
\label{1.2}
&(\square +1)A_i = -\kappa \epsilon^{ij} F_{0j} -2e Im(\phi \overline{D_i \phi}) +A_i \\
\label{1.3}
&(\square +1)\phi =  2ie A_0 \partial_0 \phi -2ie A^j \partial_j \phi -e^2 A^j A_j \phi +e^2 A_0^2 \phi - U_{\overline{\phi}}(|\phi|^2,N) + \phi \\
\label{1.4}
&(\square +1)N = - U_N(|\phi|^2,N) + N \, .
\end{align}
Here we replaced $\square := \partial_t^2 - \Delta$ by $\square +1$ by adding a linear terms on both sides of the equations in oder to avoid the operator $(-\Delta)^{-\frac{1}{2}}$, which is unpleasent especially  in two dimensions.

Defining
\begin{align*}
&A_{\mu,\pm} = \frac{1}{2}(A_{\mu} \pm i^{-1} \langle \nabla \rangle^{-1} \partial_t A_{\mu}) &\hspace{-1em}\Leftrightarrow &A_{\mu}=A_{\mu,+} + A_{\mu,-} \,,\, \partial_t A_{\mu}=i\langle \nabla \rangle(A_{\mu,+}-A_{\mu,-}) \\
&\phi_{\pm} = \frac{1}{2}(\phi \pm i^{-1} \langle \nabla \rangle^{-1} \partial_t \phi) &\hspace{-1em}\Leftrightarrow &\phi = \phi_+ + \phi_- \, , \, \partial_t \phi = i \langle \nabla \rangle(\phi_+ - \phi_-) \\
&N_{\pm} = \frac{1}{2}(N \pm i^{-1} \langle \nabla \rangle^{-1} \partial_t N) &\hspace{-1em}\Leftrightarrow &N = N_+ + N_- \, , \, \partial_t N= i \langle \nabla \rangle(N_+ - N_-)
\end{align*}
we obtain the equivalent system
\begin{align}
\label{1.1'}
(i \partial_t \pm \langle \nabla \rangle) A_{0,\pm} & = \pm 2^{-1} \langle \nabla \rangle^{-1} ( \, {\mbox R.H.S.\, of} \,(\ref{1.1})) \\
\label{1.2'}
(i \partial_t \pm \langle \nabla \rangle) A_{j,\pm} & = \pm 2^{-1} \langle \nabla \rangle^{-1} ( \, {\mbox R.H.S.\, of} \,(\ref{1.2})) \\
\label{1.3'}
(i \partial_t \pm \langle \nabla \rangle) \phi_{\pm} & = \pm 2^{-1} \langle \nabla \rangle^{-1} ( \, {\mbox R.H.S.\, of} \,(\ref{1.3})) \\
\label{1.4'}
(i \partial_t \pm \langle \nabla \rangle) N_{\pm} & = \pm 2^{-1} \langle \nabla \rangle^{-1} ( \, {\mbox R.H.S.\, of} \,(\ref{1.4})) 
\end{align}

Denoting the Fourier transform $\mathcal{F}$ with respect to space as well as to space and time by $\,\widehat{}$ the operator
$\langle \nabla \rangle^{\alpha}$ is defined by $\mathcal{F}(\langle \nabla \rangle^{\alpha} f)(\xi) = \langle \xi \rangle^{\alpha} \widehat{f}(\xi)$, where $\langle \cdot \rangle := (1+|\cdot|^2)^{\frac{1}{2}}$ . Define
$a \pm := a \pm \epsilon$ for $\epsilon > 0$ sufficiently small.

We obtain the following result:                                  
\begin{theorem}
\label{Theorem1}
Let $1<r\le 2$ and $s,l,m \ge 1$ . Assume $ s > \frac{25}{16r}-\frac{1}{4}$ , $l > \frac{13}{8r}-\half$ , $m > \frac{13}{8r} - \half$, $s-1 \le l \le s+1$ , $s-1 \le m \le s+1$ and 
$2l-s > \frac{7}{4r}-1$ , $2s-l > \frac{3}{2r}$ , $2m-s > \frac{7}{4r} -1$ , $2s-m > \frac{7}{4r}-1$ . Assume
\begin{align*}
 &\phi_0 \in \widehat{H}^{s,r} \, , \, \phi_1 \in \widehat{H}^{s-1,r} \, , \, a_{\mu 0} \in \widehat{H}^{l,r} \, , \, a_{\mu 1} \in \widehat{H}^{l-1,r}  \, (\mu =0,1,2) \, ,\\ &n_0 \in \widehat{H}^{m,r} \, , \, n_1 \in \widehat{H}^{m-1,r}\, . 
\end{align*}
There exists  $T>0$ such that
the system (\ref{1.1}),(\ref{1.2}),(\ref{1.3}),(\ref{1.4}) with (\ref{*}),(\ref{****}) and Cauchy conditions
$$ \phi(0)= \phi_0 \, , \, \partial_t \phi(0) = \phi_1 \, , \,A_{\mu}(0) = a_{\mu 0} \, , \, \partial_t A_{\mu}(0) = a_{\mu 1} \, , \,\, N(0)=N_0 \, , \, \partial_t N(0) = N_1 $$
has a unique local solution 
\begin{align*}
& \phi \in X_+^{s,\frac{1}{r}+}[0,T] + X_-^{s,\frac{1}{r}+}[0,T] \\& A_{\mu} \in X_+^{l,\frac{1}{r}+}[0,T] +X_-^{l,\frac{1}{r}+}[0,T] \\ 
&N \in X^{m,\frac{1}{r}+}_+[0,T] + X_-^{m,\frac{1}{r}+}[0,T] \, .
\end{align*}
These spaces are defined in Definition \ref{Def.} below,
It has the properties
\begin{align*}
& \phi \in C^0([0,T],\widehat{H}^{s,r}) \cap C^1([0,T],\widehat{H}^{s,r}) \\
& A_{\mu} \in C^0([0,T],\widehat{H}^{l,r}) \cap C^1([0,T],\widehat{H}^{l-1,r}) \\
& N \in C^0([0,T],\widehat{H}^{m,r}) \cap C^1([0,T],\widehat{H}^{m-1,r}) \, .
\end{align*}
\end{theorem}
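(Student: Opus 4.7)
The plan is to run a standard contraction-mapping / Picard iteration on the first-order reformulation (\ref{1.1'})--(\ref{1.4'}) in the product space
\[
\bigl(X_+^{s,\frac{1}{r}+}[0,T] + X_-^{s,\frac{1}{r}+}[0,T]\bigr) \times \bigl(X_+^{l,\frac{1}{r}+}[0,T] + X_-^{l,\frac{1}{r}+}[0,T]\bigr) \times \bigl(X_+^{m,\frac{1}{r}+}[0,T] + X_-^{m,\frac{1}{r}+}[0,T]\bigr),
\]
where the $X_{\pm}^{s,b}$-spaces are the Fourier-Lebesgue analogues of Bourgain spaces adapted to the half-wave operators $i\partial_t \pm \langle \nabla \rangle$. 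First I would recast the Cauchy problem for each component $A_{\mu,\pm}$, $\phi_\pm$, $N_\pm$ as a Duhamel integral equation, with the free evolution generated by the projected data and the nonlinear contribution given by the right-hand sides of (\ref{1.1'})--(\ref{1.4'}) multiplied by $\pm 2^{-1} \langle \nabla \rangle^{-1}$. The standard linear theory for $X_\pm^{s,b}$ on a time interval $[0,T]$ (which, in the Fourier-Lebesgue setting with $b = \tfrac{1}{r}+$, embeds into $C^0([0,T],\widehat{H}^{s,r})$ and gains a small positive power of $T$ in the inhomogeneous estimate) reduces the whole problem to establishing the multilinear estimates for the nonlinear terms.

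The quadratic terms split into two essentially different classes. The terms $\kappa F_{12}$, $\kappa \epsilon^{ij}F_{0j}$ and the magnetic-type pieces $\mathrm{Im}(\phi\, \overline{\partial_\mu \phi})$ arising in the $A$-equations are \emph{generic} wave-wave or Klein--Gordon products: here one appeals to the bilinear Klein-Gordon/wave estimates in $H^r_{s,b}$ (and $X^r_{s,b,\pm}$) proved in Chapter 3 via the Foschi-Klainerman \cite{FK}, Gr\"unrock \cite{G}, Grigoryan-Nahmod \cite{GN} and Grigoryan-Tanguay \cite{GT} machinery; these are precisely the estimates that dictate the thresholds $2l - s > \frac{7}{4r}-1$, $2s-l > \frac{3}{2r}$ and the corresponding conditions on $m$. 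The delicate case, and the one that ultimately forces the regularity $s > \frac{25}{16r} - \frac{1}{4}$, is the term $A_\mu \partial^\mu \phi$ in (\ref{1.3}), which would \emph{not} close at this level as a generic product. Here I would use the classical Klainerman-Machedon / Selberg-Tesfahun null structure \cite{KM,ST,ST1}: the Lorenz gauge $\partial^\mu A_\mu = 0$ lets one rewrite $A_\mu \partial^\mu \phi$ as a sum of genuine null forms of the type $Q_{0}$ and $Q_{\alpha\beta}$ on the characters produced by the $\pm$ decomposition, to which the null-form bilinear estimates of Chapter 3 apply.

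The cubic and higher-order nonlinearities $A^j A_j \phi$, $A_0^2\phi$, $N^2\phi$, $e^2(N + ev^2/\kappa)^2\phi$, the Chern-Simons contribution $\kappa N \phi$ and the remaining polynomial terms from $U_{\overline{\phi}}$ and $U_N$ in (\ref{*}), (\ref{****}), as well as the added linear correction terms $A_\mu$, $\phi$, $N$ on the right-hand sides, are controlled by iterating the bilinear estimates (together with the algebra-type property of $\widehat{H}^{s,r}$ in the relevant range $s,l,m \ge 1$, which is precisely why that hypothesis is imposed) and are harmless compared with the quadratic terms. Combining everything gives estimates of the schematic form
\[
\|\mathcal{N}(u,v,w)\|_{X^{\cdot,\frac{1}{r}+ -1}_\pm[0,T]} \lesssim T^{\theta}\,\mathcal{P}\bigl(\|u\|,\|v\|,\|w\|\bigr)
\]
for some $\theta > 0$ and a polynomial $\mathcal{P}$, which together with the linear estimate yields a contraction on a closed ball for $T$ small enough depending on the data norms.

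Uniqueness in the stated $X_\pm^{\cdot,\frac{1}{r}+}[0,T]$ subspace follows from the same multilinear estimates applied to the difference of two solutions. Persistence of regularity, i.e.\ $A_\mu \in C^0([0,T], \widehat{H}^{l,r}) \cap C^1([0,T], \widehat{H}^{l-1,r})$ and analogous statements for $\phi$ and $N$, is a standard consequence of the embedding $X_\pm^{\sigma, b}[0,T] \hookrightarrow C^0([0,T], \widehat{H}^{\sigma,r})$ for $b > \tfrac{1}{r}$ applied to each $\pm$ component and then to the time derivative via the half-wave equations. The main obstacle throughout is the simultaneous matching of the null-form bilinear estimate for $A_\mu \partial^\mu\phi$ with the generic bilinear estimates for the $A$-equation source terms, while keeping $s$, $l$, $m$ as small as the scaling-type conditions allow; the announced exponents in the theorem are essentially the intersection of all these constraints, and the proof reduces, after the reformulation just described, to checking each of the individual multilinear bounds in the appropriate $H^r_{s,b}$ / $X^r_{s,b,\pm}$ scale.
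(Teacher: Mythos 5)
Your proposal follows essentially the same route as the paper: contraction mapping via Gr\"unrock's local well-posedness theorem in the $X^r_{s,b,\pm}$ scale, the Lorenz-gauge null-structure decomposition of $A_\mu\partial^\mu\phi$ into $Q_{12}$ and $Q_{i0}$ forms plus a smooth remainder, generic bilinear estimates for the source terms of the $A$- and $N$-equations, and the cubic lemma for the remaining terms. The only slight inaccuracy is the attribution of the threshold $s>\frac{25}{16r}-\frac{1}{4}$ to the null-form term; in the paper it arises instead from combining the generic bilinear estimate $2s-l>\frac{3}{2r}$ for $\phi\,\overline{D_0\phi}$ in the $A$-equation with the lower bound $l>\frac{13}{8r}-\half$ coming from the cubic terms, while the null forms themselves only require $l\ge s-1$ and $l\ge\frac{1}{r}$.
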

This result is proven in section 4. 

The following theorem was proven in \cite{P}.
\begin{theorem}
\label{Theorem1.1'}
Let $r=2$ . Assume that $1 >s>\half$ , $l=2s-\frac{3}{4}-$ , $m=\half$ . Then the statements of Theorem \ref{Theorem1} remain true.
\end{theorem}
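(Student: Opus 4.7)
The plan is to adapt the framework employed for Theorem~\ref{Theorem1} to the specific case $r=2$. At $r=2$ the generalized Bourgain-Klainerman-Machedon spaces $X^r_{s,b,\pm}$ of Definition~\ref{Def.} reduce to the classical spaces $X^{s,b}_\pm$ with $b=\half+$, and the linear theory (Duhamel formula, transfer principle, energy inequality, restriction to $[0,T]$) is standard. First I would pass to the first-order half-wave system (\ref{1.1'})--(\ref{1.4'}) and attack it by Picard iteration, placing $\phi_\pm \in X^{s,\half+}_\pm[0,T]$, $A_{\mu,\pm}\in X^{2s-\frac{3}{4}-,\half+}_\pm[0,T]$ and $N_\pm\in X^{\half,\half+}_\pm[0,T]$, so that the stated regularities of $\phi,A_\mu,N$ in $C^0H^s_x\cap C^1 H^{s-1}_x$ etc.\ are recovered from the standard embedding $X^{s,\half+}_\pm \hookrightarrow C^0_t H^s_x$.

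Local well-posedness then reduces to a finite list of bilinear and trilinear estimates for the right-hand sides of (\ref{1.1})--(\ref{1.4}): the Maxwell sources $F_{ij}$ and $\mathrm{Im}(\phi\overline{D^\mu\phi})$ in (\ref{1.1})--(\ref{1.2}), the derivative term $A^\mu\partial_\mu\phi$ in (\ref{1.3}), the cubic magnetic and electric contributions $A^jA_j\phi$ and $A_0^2\phi$, and the Higgs-type polynomials $|\phi|^2\phi$, $N\phi$, $N^2\phi$, $N^2N$ in (\ref{1.3})--(\ref{1.4}). Each nonlinearity must be controlled in $X^{s-1,-\half+}_\pm$ (or the analogous spaces at regularity $l-1$ and $m-1$) by products of the iteration norms at $s$, $2s-\frac{3}{4}-$ and $\half$. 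In two space dimensions with $s$ only slightly above $\half$, $H^s$ is not an algebra, so these estimates are not gifts of Sobolev embedding and have to be proved via bilinear Klein-Gordon estimates.

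The decisive input is the null structure. For the principal derivative term $-2ie\,A^\mu\partial_\mu\phi$ in (\ref{1.3}), the Lorenz condition $\partial^\mu A_\mu=0$, together with the observation of Klainerman-Machedon \cite{KM} as applied in this context by Selberg-Tesfahun \cite{ST}, \cite{ST1}, allows one to rewrite the term, modulo strictly easier pieces, as a linear combination of the classical null forms $Q_{\mu\nu}(A,\phi)=\partial_\mu A\,\partial_\nu\phi-\partial_\nu A\,\partial_\mu\phi$. Their symbols gain an angular factor between the two input Fourier frequencies which, fed into the bilinear Klein-Gordon estimates of Foschi-Klainerman \cite{FK} and d'Ancona-Foschi-Selberg \cite{AFS}, is precisely what permits $A_\mu$ to sit $\frac{3}{4}-$ derivatives below $\phi$ while the product still closes in $X^{s-1,-\half+}_\pm$. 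The numerology $s>\half$, $l=2s-\frac{3}{4}-$, $m=\half$ then emerges as the sharp endpoint at which all the required bilinear estimates hold simultaneously; in particular the coupling $l=2s-\frac{3}{4}-$ records exactly the balance between the two regularity losses in the null-form estimate.

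The hard part is this null-form estimate near $s\to\half+$, where $l\to\frac{1}{4}+$. At that regularity every naive bilinear $X^{s,b}$ product estimate, which ignores the null cancellation, fails; the estimate closes only after the $Q_{\mu\nu}$ angular gain is combined with a careful dyadic decomposition in both frequency and modulation and matched against the two-dimensional Klein-Gordon version of the bilinear Strichartz bounds of \cite{FK} and \cite{AFS}. Verifying this matching in the high-high, high-low and resonant (output-frequency small or modulation small) regimes, while also confirming that the cubic terms $A^jA_j\phi$, $A_0^2\phi$, $N^2\phi$ (which are subcritical and do not saturate scaling) fit into the same iteration space, is the computational heart of the proof; the Higgs self-interactions $|\phi|^2\phi$ and the $N\phi$ coupling are then routine power estimates in $X^{s,\half+}_\pm$ in two dimensions.
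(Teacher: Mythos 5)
You should first note that the paper does not prove Theorem \ref{Theorem1.1'} at all: it is quoted from the author's earlier work \cite{P}, where the full argument is carried out. Measured against that argument, your outline identifies the correct strategy --- reduction to the half-wave system (\ref{1.1'})--(\ref{1.4'}), Picard iteration in $X^{s,\frac{1}{2}+}_{\pm}\times X^{2s-\frac{3}{4}-,\frac{1}{2}+}_{\pm}\times X^{\frac{1}{2},\frac{1}{2}+}_{\pm}$, and above all the Selberg--Tesfahun rewriting of $A^\mu\partial_\mu\phi$ under the Lorenz condition as $Q_{12}$ and $Q_{i0}$ null forms plus a $\langle\nabla\rangle^{-2}$-smoothed remainder, exactly as reproduced for general $r$ in Section 4 of the present paper. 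So the route is the right one.

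The gap is that nothing beyond the route is supplied. Every quantitative step --- the null-form estimate that allows $A_\mu$ at regularity $2s-\frac{3}{4}-$ against $\phi$ at regularity $s$ with output in $X^{s-1,-\frac{1}{2}+}_{\pm}$, the product estimates placing $\mathrm{Im}(\phi\overline{D^\mu\phi})$ in $H^{l-1,-\frac{1}{2}+}$, and the trilinear bounds for $A^2\phi$, $N^2\phi$, $N|\phi|^2$ at the low regularity $m=\frac{1}{2}$ --- is named but never verified; you explicitly defer the ``computational heart'' (the dyadic case analysis against the bilinear estimates of \cite{FK} and \cite{AFS}) rather than performing it, and the assertion that the numerology $l=2s-\frac{3}{4}-$, $m=\frac{1}{2}$ is precisely the endpoint at which all required estimates close is exactly the content that has to be checked. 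Two smaller inaccuracies: the nonlinearity of (\ref{1.4}) contains no $N^3$-type term (you list ``$N^2N$''), only $|\phi|^2$, $N$ and $N|\phi|^2$; and the cubic terms and $N|\phi|^2$ at $m=\frac{1}{2}$ in two space dimensions are not ``routine power estimates'', since $H^{1/2}(\R^2)$ is far from an algebra --- in \cite{P}, as in Lemma \ref{cubic} here, they require genuine trilinear wave-Sobolev estimates exploiting the $\langle|\tau|-|\xi|\rangle$ weights. As it stands the proposal is a correct plan for the proof, not a proof.
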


By "interpolation" between the results of Theorem \ref{Theorem1} for $r=1+$ and Theorem \ref{Theorem1.1'} ($r=2$) we obtain the following improvement.

\begin{Cor}
\label{Cor.1}
Let $1<r\le 2$ . The statements of Theorem \ref{Theorem1} remain true in the case 
$s = \frac{13}{8r}-\frac{5}{16}+\epsilon$ , $l = \frac{7}{4r} - \frac{5}{8} +\epsilon$ and $m =\frac{5}{4r}-\frac{1}{8} + \epsilon$ , where $\epsilon > 0$ is sufficiently small.
\end{Cor}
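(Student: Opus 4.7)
The plan is to obtain Corollary~\ref{Cor.1} by complex interpolation in the Fourier parameter $r$ between the two endpoint well-posedness results already available: Theorem~\ref{Theorem1} in the limit $r\to 1^+$, which gives local well-posedness at regularities $(s_0,l_0,m_0) = (\tfrac{21}{16}+,\tfrac{9}{8}+,\tfrac{9}{8}+)$, and Theorem~\ref{Theorem1.1'} at $r_1=2$, which gives it at $(s_1,l_1,m_1) = (\tfrac{1}{2}+,\tfrac{1}{4}+,\tfrac{1}{2}+)$. What makes the interpolation work is that both the Fourier--Lebesgue spaces $\widehat{H}^{s,r}$ and the solution spaces $X^{r}_{s,b,\pm}$ are, on the frequency side, weighted $L^{r'}$-spaces with polynomial weights $\langle\xi\rangle^s$, respectively $\langle\xi\rangle^s\langle\tau \pm |\xi|\rangle^b$, so by Stein--Weiss / Riesz--Thorin they form complex interpolation scales with the identification
\[
[\widehat{H}^{s_0,r_0},\widehat{H}^{s_1,r_1}]_\theta = \widehat{H}^{s_\theta,r_\theta}, \qquad \tfrac{1}{r_\theta} = \tfrac{1-\theta}{r_0} + \tfrac{\theta}{r_1}, \quad s_\theta = (1-\theta)s_0 + \theta s_1,
\]
and similarly for $X^{r}_{s,b,\pm}$ (with $b$ also affine in $\theta$).

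Choosing $r_0 = 1$ and $r_1 = 2$, the relation $\tfrac{1}{r} = 1 - \tfrac{\theta}{2}$ forces $\theta = 2(1 - \tfrac{1}{r}) \in [0,1]$ for $r\in[1,2]$, and a direct computation gives
\[
s_\theta = \bigl(\tfrac{2}{r}-1\bigr)\tfrac{21}{16} + \bigl(2-\tfrac{2}{r}\bigr)\tfrac{1}{2} = \tfrac{13}{8r} - \tfrac{5}{16},
\]
together with $l_\theta = \tfrac{7}{4r}-\tfrac{5}{8}$ and $m_\theta = \tfrac{5}{4r} - \tfrac{1}{8}$, matching precisely the indices of the corollary once $+\epsilon$ absorbs the strict inequalities at the endpoints. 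The contraction mapping argument of section~4 reduces well-posedness entirely to a fixed finite list of linear, bilinear and trilinear estimates between $X^{r}_{s,b,\pm}$ spaces, including the null-form bounds for $A^\mu\partial_\mu\phi$ from chapter~3. Each such estimate is a bounded multilinear map whose boundedness is established at the two endpoints, so applying multilinear complex interpolation (Calder\'on) to this analytic family, with $b=\tfrac{1}{r}+$ also interpolated affinely in $\theta$, produces the corresponding estimates at the intermediate $r$ with the new regularity indices $(s_\theta,l_\theta,m_\theta)$. The null structure enters only as a pointwise symbol identity on the Fourier side, so it survives the interpolation unchanged.

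The main obstacle is the bookkeeping of the small losses. Concretely, the strict-inequality endpoint conditions must be met with matched $+\epsilon$ losses so that their affine interpolant is again a positive loss of size $\epsilon$; the $b$-indices at $r=1+$ and at $r=2$ must be compatible so that their affine interpolant equals the desired $\tfrac{1}{r}+$; and the auxiliary constraints $s-1\le l\le s+1$, $s-1\le m\le s+1$, $2l-s>\tfrac{7}{4r}-1$, etc.\ of Theorem~\ref{Theorem1} must persist at the intermediate indices, which is automatic since each is a linear inequality in $(s,l,m,\tfrac{1}{r})$ holding at both endpoints and hence at every convex combination. Note that the claim cannot be obtained merely by re-applying Theorem~\ref{Theorem1} at the intermediate $r$: at, say, $r=\tfrac{3}{2}$ the corollary's $s$ lies strictly below the Theorem~\ref{Theorem1} threshold $\tfrac{25}{16r}-\tfrac{1}{4}$, which is exactly why interpolation of the multilinear estimates (rather than of the theorems) is essential. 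Once these compatibility checks are made, no new PDE analysis is required and the fixed-point argument of section~4 closes at the intermediate indices.
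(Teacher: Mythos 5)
Your proposal is correct and follows essentially the same route as the paper, which likewise obtains the corollary by complex interpolation of the bi- and trilinear estimates for the nonlinearities between the endpoints $r=1+$ (Theorem \ref{Theorem1}) and $r=2$ (Theorem \ref{Theorem1.1'}), using the interpolation scale of the spaces $X^r_{s,b,\pm}$. Your explicit computation of $\theta=2(1-\tfrac1r)$ and of $(s_\theta,l_\theta,m_\theta)$, and your observation that one must interpolate the multilinear estimates rather than the theorems, simply spell out what the paper leaves as "an easy calculation".
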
 

This is the minimal regularity for the data which is admissible by our method. Other combinations of $s$ , $l$ and $m$ are certainly possible, but we do not persue this.

In \cite{P} we obtained the following theorem as a consequence of 
Theorem \ref{Theorem1.1'}. The same proof applies for Theorem \ref{Theorem1} and Corollary \ref{Cor.1}.
 \begin{theorem}
\label{Theorem2}
Let the assumptions of Theorem \ref{Theorem1} or Theorem \ref{Theorem1.1'} be satisfied.
Moreover assume that the data fulfill
\begin{equation}
\label{GL}
\Delta a_{00} - \partial_1 a_{11} - \partial_2 a_{21} -\kappa(\partial_1 a_{20} - \partial_2 a_{10}) - 2e Im(\phi_0 \overline{\phi}_1) +2e^2 a_{00}|\phi_0|^2 = 0 
\end{equation} 
and
\begin{equation}
\label{21}
 \partial^{\mu} a_{\mu} = 0 \, . 
 \end{equation}
The solution of Theorem \ref{Theorem1} or Theorem \ref{Theorem1.1'} is the unique solution of the Cauchy problem
for the system
\begin{align}
\label{22}
& \partial_{\lambda} F^{\lambda \rho} + \frac{\kappa}{2} \epsilon^{\mu \nu \rho} F_{\mu \nu} +  2e Im( \phi  \overline{D^{\rho} \phi})  = 0\\
\label{23}
&D_{\mu} D^{\mu} \phi  + U_{\overline{\phi}}(|\phi|^2,N) = 0 \\
\label{24}
& \partial_{\mu} \partial^{\mu} N + U_N (|\phi|^2,N) = 0 \, ,
\end{align}
where
\begin{align}
\label{25}
U_{\overline{\phi}}(|\phi|^2,N) &= (e|\phi|^2+\kappa N)\phi + e^2(N+\frac{ev^2}{\kappa})^2 \phi \\
\label{25''}
 U_N(|\phi|^2,N) &= \kappa (e |\phi|^2 + \kappa N) +2e^2(N+\frac{ev^2}{\kappa}) |\phi|^2 
\end{align}
with initial conditions 
\begin{align}
\label{25'}
A_{\mu}(0) = a_{\mu 0} \, , \, \partial_tA_{\mu}(0) = a_{\mu 1} \, , \, \phi_0 = \phi_0 \, , \, \partial_t \phi(0) = \phi_1 \, , \, N(0)=N_0 \, , \, \partial_t N(0) = N_1 \, ,
\end{align}
 which fulfills the Lorenz condition $ \partial^{\mu} A_{\mu} = 0 \, . $
\end{theorem}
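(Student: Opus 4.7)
The strategy is to exploit that (\ref{1.1})--(\ref{1.4}) was derived from (\ref{22})--(\ref{24}) by imposing the Lorenz gauge, so that it suffices to verify \emph{a posteriori} that the solution produced by Theorem \ref{Theorem1} (resp.\ Theorem \ref{Theorem1.1'}) satisfies $\partial^\mu A_\mu = 0$ throughout $[0,T]$. Indeed, once $L := \partial^\mu A_\mu \equiv 0$ is established, the algebraic manipulation that led from (\ref{2.1})--(\ref{2.3}) to (\ref{1.1})--(\ref{1.4}) reverses: (\ref{1.1}) becomes (\ref{1.1*}), (\ref{1.2}) becomes (\ref{1.2*})--(\ref{1.3*}), and (\ref{1.3})--(\ref{1.4}) coincide with (\ref{23})--(\ref{24}) rewritten under the Lorenz condition, so (\ref{22})--(\ref{24}) hold.

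The vanishing of $L$ follows from the standard gauge-preservation argument. First one checks the Cauchy data of $L$: $L(0) = 0$ is exactly the Lorenz condition (\ref{21}) on the data, and $\partial_t L(0) = 0$ is precisely the content of the Gauss constraint (\ref{GL}) once $\partial_t^2 A_0(0)$ is expressed via (\ref{1.1}). Next, taking the divergence of the wave equations (\ref{1.1})--(\ref{1.2}), i.e.\ applying $\partial^\rho$ to the $A_\rho$ equation for each $\rho$, yields $(\square+1)L$ on the left-hand side. On the right the contribution of $\partial_\lambda F^{\lambda\rho}$ vanishes by the antisymmetry of $F$, the Chern--Simons contribution $\frac{\kappa}{2}\epsilon^{\mu\nu\rho}F_{\mu\nu}$ vanishes on divergence by the Bianchi identity together with the total antisymmetry of $\epsilon$, and the divergence of the matter current $2e\,Im(\phi\,\overline{D^\rho\phi})$ evaluates to a term linear in $L$ --- this is the statement that the $U(1)$ Noether current is conserved modulo a correction proportional to $\partial^\mu A_\mu$, as one sees from the covariant Klein--Gordon equation (\ref{1.3}). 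One thereby obtains a homogeneous linear evolution equation of the schematic form
\begin{equation*}
(\square + 1)L = c(\phi,N,A)\,L,
\end{equation*}
with coefficient $c$ depending bilinearly on the already-constructed solution.

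Uniqueness for this linear equation in the Fourier--Lebesgue Bourgain space of Definition \ref{Def.}, combined with $L(0) = \partial_t L(0) = 0$, then forces $L \equiv 0$. The uniqueness statement in Theorem \ref{Theorem2} is immediate: any other solution of (\ref{22})--(\ref{24}) in the stated function spaces that fulfills the Lorenz gauge must satisfy (\ref{1.1})--(\ref{1.4}) with the same data, and hence coincides with our solution by the uniqueness clause of Theorem \ref{Theorem1} (resp.\ Theorem \ref{Theorem1.1'}). The main obstacle I anticipate is the linear well-posedness of the equation for $L$ at the low regularities of Theorem \ref{Theorem1} and Corollary \ref{Cor.1}: the coefficient $c(\phi,N,A)$ is only as smooth as the bilinear estimates of Section 3 allow, so one must verify that multiplication by $c$ is bounded into the appropriate dual Bourgain space. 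This is essentially the same verification as the one carried out for $r = 2$ in \cite{P}, which is why the statement that ``the same proof applies'' in the paper is justified.
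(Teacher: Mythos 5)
Your proposal is correct and is essentially the paper's own argument: the paper does not reprove Theorem \ref{Theorem2} here but defers to \cite{P}, where exactly this gauge-propagation scheme is carried out --- showing $L=\partial^\mu A_\mu$ satisfies a homogeneous linear wave equation with $L(0)=\partial_t L(0)=0$ (the latter via the Gauss constraint and the $A_0$-equation), concluding $L\equiv 0$ by uniqueness in the $X^{r}_{s,b,\pm}$-spaces, and then reversing the reduction to recover (\ref{22})--(\ref{24}). Your closing remark correctly identifies the only point that needs checking at the new regularities, namely that the bilinear estimates of Section 3 suffice for the linear equation for $L$, which is precisely why the paper can assert that ``the same proof applies.''
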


\begin{Def}
\label{Def.}
Let $1\le r\le 2$ , $s,b \in \R$ .  We recall, that the Fourier-Lebesgue space $\widehat{H}^{s,r}$ is defined by its norm $\|u\|_{\widehat{H}^{s,r}} = \| \langle \xi \rangle^s \widehat{u}(\xi)\|_{L^{r'}}$ for dual exponents $r$ and $r'$. The wave-Sobolev spaces $H^r_{s,b}$ are the completion of the Schwarz space ${\mathcal S}(\R^{1+3})$ with norm
$$ \|u\|_{H^r_{s,b}} = \| \langle \xi \rangle^s \langle  |\tau| - |\xi| \rangle^b \widehat{u}(\tau,\xi) \|_{L^{r'}_{\tau \xi}} \, , $$ 
where $r'$ is the dual exponent to $r$.
We also define $H^r_{s,b}[0,T]$ as the space of the restrictions of functions in $H^r_{s,b}$ to $[0,T] \times \mathbb{R}^3$.  Similarly we define $X^r_{s,b,\pm} $ with norm  $$ \|\phi\|_{X^r_{s,b\pm}} := \| \langle \xi \rangle^s \langle \tau \pm |\xi| \rangle^b \tilde{\phi}(\tau,\xi)\|_{L^{r'}_{\tau \xi}} $$ and $X^r_{s,b,\pm}[0,T] $ .\\
In the case $r=2$ we denote $H^2_{s,b} = : H^{s,b}$ and similarly $X^2_{s,b,\pm} = : X^{s,b}_{\pm}$ .  Remark that $\|u\|_{X^r_{s,b,\pm}} \le \|u\|_{H^r_{s,b}}$ for $b \le 0$ and the reverse estimate for $b \ge 0$.
\end{Def}

\noindent{\bf Remark 1:} The system (\ref{1.1})-(\ref{1.4}) is not scaling invariant, but ignoring lower order terms we can write it schematically as
\begin{align*}
\Box{A} & = \phi \nabla \phi + A \phi^2 \\
\Box{\phi} & = A \nabla \phi +A^2 \phi + N \phi^2 + \phi^3 \\
\Box{N} & = N \phi^2 \, .
\end{align*}
This system is invariant under the scaling 
$$ A_{\lambda}(x,t) = \lambda A(\lambda x,\lambda t) \, , \, \phi_{\lambda}(x,t) = \lambda \phi(\lambda x,\lambda t) \, , \,N_{\lambda}(x,t) = \lambda N(\lambda x,\lambda t) \, .$$ 
This implies
\begin{align*}
\|A_{\lambda}(0,\cdot)\|_{\dot{\widehat{H}}^{l,r}} &= \lambda^{1+l-\frac{2}{r}} \|a_{0}\|_{\dot{\widehat{H}}^{l,r}} \, , \\
\|\phi_{\lambda}(0,\cdot)\|_{\dot{\widehat{H}}^{s,r}} &= \lambda^{1+l-\frac{2}{r}} \|\phi_{0}\|_{\dot{\widehat{H}}^{s,r}}	\\
\| N_{\lambda}(0,\cdot)\|_{\dot{\widehat{H}}^{m,r}} &= \lambda^{1+m-\frac{2}{r}} \|N_0\|_{\dot{\widehat{H}}^{m,r}}               \, .
\end{align*}
Here $\|u\|_{\dot{\widehat{H}}^{s,r}} := \|| \xi|^s \widehat{u}(\xi)\|_{L^{r'}}$ , where $r$ and $r'$ are dual exponents.

Thus the critical data space with respect to scaling for  $A(0)$ , $\phi(0)$ , $N(0)$ (in dimension 2) is $\widehat{H}^{\frac{2}{r}-1,r}$. If we consider Theorem \ref{Theorem1} in the case $r=1+$ there remains a gap between this space and our minimal assumptions in Theorem \ref{Theorem1}, namely $A(0) \in H^{\frac{9}{8}+}$ , $\phi(0) \in \widehat{H}^{\frac{21}{16}+,r}$ , $N(0) \in H^{\frac{9}{8}+}$. Thus the gap amounts to $\frac{5}{16}$ for $s$ and $\frac{1}{8}$ for $l$ and $m$ .

However, we remark that this gap is considerably smaller compared to the gap for the case $r=2$ in Theorem \ref{Theorem1.1'} , which amounts to $\half$ for $s$ and $m$ and to $\frac{1}{4}$ for $l$  .

\section{Preliminaries}

We start by collecting some fundamental properties of the solution spaces. We rely on \cite{G}. The spaces $X^r_{s,b,\pm} $  are Banach spaces with ${\mathcal S}$ as a dense subspace. The dual space is $X^{r'}_{-s,-b,\pm}$ , where $\frac{1}{r} + \frac{1}{r'} = 1$. The complex interpolation space is given by
$$(X^{r_0}_{s_0,b_0,\pm} , X^{r_1}_{s_1,b_1,\pm})_{[\theta]} = X^r_{s,b,\pm} \, , $$
where $s=(1-\theta)s_0+\theta s_1$, $\frac{1}{r} = \frac{1-\theta}{r_0} + \frac{\theta}{r_1}$ , $b=(1-\theta)b_0 + \theta b_1$ . Similar properties has the space $H^r_{s,b}$ .\\
If $u=u_++u_-$, where $u_{\pm} \in X^r_{s,b,\pm} [0,T]$ , then $u \in C^0([0,T],\hat{H}^{s,r})$ , if $b > \frac{1}{r}$ .

The "transfer principle" in the following proposition, which is well-known in the case $r=2$, also holds for general $1<r<\infty$ (cf. \cite{GN}, Prop. A.2 or \cite{G}, Lemma 1). We denote $ \|u\|_{\hat{L}^p_t(\hat{L}^q_x)} := \|\tilde{u}\|_{L^{p'}_{\tau} (L^{q'}_{\xi})}$ .
\begin{prop}
\label{Prop.0.1}
Let $1 \le p,q \le \infty$ .
Assume that $T$ is a bilinear operator which fulfills
$$ \|T(e^{\pm_1 itD} f_1, e^{\pm_2itD} f_2)\|_{\hat{L}^p_t(\hat{L}^q_x)} \lesssim \|f_1\|_{\hat{H}^{s_1,r}} \|f_2\|_{\hat{H}^{s_2,r}}$$
for all combinations of signs $\pm_1,\pm_2$ , then for $b > \frac{1}{r}$ the following estimate holds:
$$ \|T(u_1,u_2)\|_{\hat{L}^p_t(\hat{L}^q_x)} \lesssim \|u_1\|_{H^r_{s_1,b}}  \|u_2\|_{H^r_{s_2,b}} \, . $$
\end{prop}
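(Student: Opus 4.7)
The plan follows the classical transfer principle: represent an element of the Bourgain-type space as a superposition of modulated free solutions, apply the hypothesis to each piece, and reassemble with Minkowski and H\"older. First I would reduce from $H^r_{s,b}$ to $X^r_{s,b,\pm}$ by splitting $u_j = u_j\chi_{\tau\ge 0} + u_j\chi_{\tau<0}$. On $\{\tau\ge 0\}$ one has $\langle|\tau|-|\xi|\rangle = \langle\tau-|\xi|\rangle$, so the first piece satisfies $\|u_j\chi_{\tau\ge 0}\|_{X^r_{s_j,b,-}} \le \|u_j\|_{H^r_{s_j,b}}$, and symmetrically for the other piece with the $+$ sign. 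By bilinearity, summing the four resulting sign combinations reduces the claim to
\begin{equation*}
\|T(u_1, u_2)\|_{\hat{L}^p_t(\hat{L}^q_x)} \lesssim \|u_1\|_{X^r_{s_1,b,\pm_1}}\|u_2\|_{X^r_{s_2,b,\pm_2}}
\end{equation*}
for fixed signs $\pm_1,\pm_2$.

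Next, for $u_j \in X^r_{s_j,b,\pm_j}$, the change of variable $\tau = \sigma\mp_j|\xi|$ in Fourier inversion represents
\begin{equation*}
u_j(t,x) = c\int_\R e^{it\sigma}\bigl(e^{\mp_j itD}F_{j,\sigma}\bigr)(x)\,d\sigma, \qquad \widehat{F_{j,\sigma}}(\xi) := \tilde u_j(\sigma\mp_j|\xi|,\xi).
\end{equation*}
Bilinearity turns $T(u_1,u_2)$ into a double integral whose integrand is, up to the scalar phase $e^{it(\sigma_1+\sigma_2)}$, equal to $T(e^{\mp_1itD}F_{1,\sigma_1}, e^{\mp_2itD}F_{2,\sigma_2})$. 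Since $\|\cdot\|_{\hat L^p_t(\hat L^q_x)} = \|\mathcal F\cdot\|_{L^{p'}_\tau L^{q'}_\xi}$ is translation invariant in $\tau$, this phase drops out of the norm. Applying Minkowski's integral inequality (valid because $p',q'\ge 1$) and then inserting the hypothesis pointwise yields
\begin{equation*}
\|T(u_1,u_2)\|_{\hat L^p_t(\hat L^q_x)} \lesssim \prod_{j=1,2}\int_\R \|F_{j,\sigma}\|_{\hat H^{s_j,r}}\,d\sigma.
\end{equation*}

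Finally, I would estimate each $\sigma$-integral by inserting $\langle\sigma\rangle^b\langle\sigma\rangle^{-b}$ and applying H\"older with exponents $r,r'$:
\begin{equation*}
\int\|F_{j,\sigma}\|_{\hat H^{s_j,r}}\,d\sigma \le \Bigl(\int\langle\sigma\rangle^{-br}d\sigma\Bigr)^{1/r}\Bigl(\int\langle\sigma\rangle^{br'}\|\langle\xi\rangle^{s_j}\tilde u_j(\sigma\mp_j|\xi|,\xi)\|_{L^{r'}_\xi}^{r'}d\sigma\Bigr)^{1/r'}.
\end{equation*}
The first factor is finite precisely when $br > 1$, i.e.\ $b > 1/r$; this is the sole use of the hypothesis on $b$. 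After Fubini and reverting the substitution $\sigma\mapsto\tau = \sigma\mp_j|\xi|$ for fixed $\xi$, the second factor equals $\|u_j\|_{X^r_{s_j,b,\pm_j}}$ exactly. The argument is conceptually clean; the only technical point is to justify Fubini, Minkowski, and the change of variables at the distributional level, which the definition's built-in density of $\mathcal S(\R^{1+3})$ in $H^r_{s,b}$ and $X^r_{s,b,\pm}$ renders routine.
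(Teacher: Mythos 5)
Your argument is correct and is precisely the standard foliation proof of the transfer principle; the paper itself gives no proof of Proposition \ref{Prop.0.1}, citing instead \cite{GN} (Prop.~A.2) and \cite{G} (Lemma~1), whose proofs run along exactly these lines: split $\tilde u_j$ according to the sign of $\tau$ to pass from $H^r_{s_j,b}$ to $X^r_{s_j,b,\pm}$, foliate as $u_j = c\int e^{it\sigma} e^{\mp_j itD}F_{j,\sigma}\,d\sigma$, apply Minkowski and the free-wave hypothesis, and close with H\"older in $\sigma$ using $br>1$. The one point worth making explicit is the step where the phase $e^{it(\sigma_1+\sigma_2)}$ is pulled out of $T$: this is not a consequence of bilinearity alone but requires that $T$ commute with temporal modulations, $T(e^{it\sigma_1}v_1,e^{it\sigma_2}v_2)=e^{it(\sigma_1+\sigma_2)}T(v_1,v_2)$; this holds for every operator to which the proposition is applied in the paper (products, null forms, and their compositions with spatial Fourier multipliers, all acting pointwise in $t$), and is implicitly assumed in the statement as well, so it is a caveat rather than a gap.
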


The following general local well-posedness theorem was given by  \cite{G}, Thm. 1.
\begin{theorem}
\label{Theorem0.3}
Let $N_{\pm}(u):=N_{\pm}(u_+,u_-)$ be a  multilinear functions.
Assume that for given $s \in \R$, $1 < r < \infty$ there exist $ b > \frac{1}{r}$ such that the estimate
$$ \|N_{\pm}(u)\|_{X^r_{s,b-1+,\pm}} \le \omega( \|u\|_{X^r_{s,b}}) $$
is valid with a nondecreasing function $\omega$ , where $\|u\|_{X^r_{s,b}} := \|u_-\|_{X^r_{s,b,-}} + \|u_+\|_{X^r_{s,b,+}}$. Then there exist $T=T(\|u_{0_ {\pm}}\|_{\hat{H}^{s,r}})$ $>0$ and a unique solution $(u_+,u_-)$ $ \in X^r_{s,b,+}[0,T] \times X^r_{s,b,-}[0,T] $ of the Cauchy problem
$$ \partial_t u_{\pm} \pm i\langle \nabla \rangle u = N_{\pm}(u)      \quad , \quad    u_{\pm}(0) = u_{0_{\pm}} \in \hat{H}^{s,r}       \, . $$
 This solution is persistent and the mapping data upon solution $(u_{0+},u_{0-})  \mapsto (u_+,u_-)$ , $\hat{H}^{s,r} \times \hat{H}^{s,r} \to X^r_{s,b,+}[0,T_0] \times X^r_{s,b,-}[0,T_0]  $ is locally Lipschitz continuous for any $T_0 < T$.
\end{theorem}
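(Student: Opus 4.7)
The plan is to set up a standard fixed-point argument via Duhamel's formula in the product space $X^r_{s,b,+}[0,T] \times X^r_{s,b,-}[0,T]$. With a smooth time cutoff $\psi$ equal to $1$ on $[-1,1]$ and supported in $[-2,2]$, I would define
$$ \Phi_{\pm}(u)(t) := \psi(t)\,e^{\mp it\langle\nabla\rangle} u_{0\pm} \;+\; \psi(t/T)\int_0^t e^{\mp i(t-s)\langle\nabla\rangle} N_{\pm}(u)(s)\,ds, $$
so that a fixed point of $\Phi=(\Phi_+,\Phi_-)$, restricted to $[0,T]$, solves the Cauchy problem. The goal is to show that on a closed ball of radius $R \sim \|u_{0+}\|_{\hat{H}^{s,r}}+\|u_{0-}\|_{\hat{H}^{s,r}}$ in $X^r_{s,b}$, for $T$ sufficiently small depending on $R$, $\Phi$ is a contraction.

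The non-trivial analytic input consists of two linear estimates for the Fourier-Lebesgue Bourgain spaces, valid for $\frac{1}{r} < b < 1$: the homogeneous bound $\|\psi(t)\,e^{\mp it\langle\nabla\rangle} f\|_{X^r_{s,b,\pm}} \lesssim \|f\|_{\hat{H}^{s,r}}$ and, crucially, the inhomogeneous bound
$$ \Big\|\psi(t/T)\int_0^t e^{\mp i(t-s)\langle\nabla\rangle} F(s)\,ds\Big\|_{X^r_{s,b,\pm}} \;\lesssim\; T^{\epsilon}\,\|F\|_{X^r_{s,b-1+\epsilon,\pm}} $$
for a small $\epsilon>0$. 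These are the Fourier-Lebesgue analogues of the classical $L^2$-based Bourgain estimates and are available from \cite{G}. The factor $T^{\epsilon}$ is the source of smallness needed to close the contraction. Combined with the hypothesized nonlinear estimate $\|N_{\pm}(u)\|_{X^r_{s,b-1+,\pm}} \le \omega(\|u\|_{X^r_{s,b}})$, they yield
$$ \|\Phi_{\pm}(u)\|_{X^r_{s,b,\pm}} \;\le\; C\,\|u_{0\pm}\|_{\hat{H}^{s,r}} + C\,T^{\epsilon}\,\omega\bigl(\|u\|_{X^r_{s,b}}\bigr), $$
so that choosing $T$ small depending on $R$ sends the ball of radius $2CR$ into itself.

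For contraction one writes $N_{\pm}(u)-N_{\pm}(v)$, by multilinearity, as a sum of multilinear expressions in which one factor is a component of $u-v$ and the remaining factors are components of $u$ or $v$; the same multilinear hypothesis then gives a bound $C T^{\epsilon}\tilde{\omega}(R)\|u-v\|_{X^r_{s,b}}$, so that after possibly shrinking $T$ one obtains a contraction, whence a unique fixed point via Banach's theorem. Persistence $u_{\pm}\in C^0([0,T],\hat{H}^{s,r})$ follows immediately from the embedding $X^r_{s,b,\pm}[0,T] \hookrightarrow C^0([0,T],\hat{H}^{s,r})$ for $b>\frac{1}{r}$ noted earlier in the paper. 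Local Lipschitz continuity of the data-to-solution map on $[0,T_0]$ for any $T_0 < T$ follows from the same contraction estimate applied with two different initial data sets.

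The main obstacle, modulo the assumed multilinear estimate, is the inhomogeneous linear estimate with the $T^{\epsilon}$ factor in the Fourier-Lebesgue setting; once this is in hand, the rest of the argument is formal. The estimate is proved by a Fourier-side splitting of the modulation variable $\tau \pm |\xi|$ into the regions $\gtrsim T^{-1}$ and $\ll T^{-1}$: on the high-modulation region one simply trades the modulation weight for a power of $T$, and on the low-modulation region one exploits the fact that $\widehat{\psi(\cdot/T)}$ is essentially supported in $[-T^{-1},T^{-1}]$ together with a Taylor expansion of the integral kernel, while the condition $b>\frac{1}{r}$ is precisely what makes the resulting dyadic $\ell^{r'}$-summation over modulation shells converge.
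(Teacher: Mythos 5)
Your argument is correct and coincides with the standard proof: the paper itself gives no proof of this theorem but simply cites Gr\"unrock \cite{G}, Thm.~1, whose argument is exactly the Duhamel/cutoff fixed-point scheme you describe, resting on the homogeneous estimate $\|\psi(t)e^{\mp it\langle\nabla\rangle}f\|_{X^r_{s,b,\pm}}\lesssim\|f\|_{\hat{H}^{s,r}}$ and the inhomogeneous estimate with the $T^{\epsilon}$ gain coming from the slack $b-1+\epsilon$ in the hypothesis, combined with the telescoping of the multilinear nonlinearity for the difference bound and the embedding $X^r_{s,b,\pm}[0,T]\hookrightarrow C^0([0,T],\hat{H}^{s,r})$ for persistence. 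The only cosmetic imprecision is in your final remark: the condition $b>\frac{1}{r}$ enters as $L^r$-integrability of the modulation weight $\langle\lambda\rangle^{-b}$ over $|\lambda|\ge 1$ (equivalently an $\ell^r$ dyadic sum of the weights, with the $\ell^{r'}$ sum falling on the $\widehat{F}$ pieces by H\"older), but this does not affect the validity of the argument.
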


\section{Bilinear estimates}
The standard null forms are given by
$$
Q_{\alpha\beta}(u,v)=\partial_\alpha u \partial_\beta v-\partial_\beta u \partial_\alpha v.
$$
Let $q_{\alpha \beta}(u,v) := Q_{\alpha \beta}(|\nabla|^{-1}u,|\nabla|^{-1}v)$ , were $|\nabla|^{-1}$ has Fourier symbol $|\xi|^{-1}$ .
The proof of the following bilinear estimates relies on estimates given by Foschi and Klainerman \cite{FK}.
\begin{lemma}
\label{Lemma5.1}
Assume $0 \le\alpha_1,\alpha_2 $ ,  $\alpha_1+\alpha_2 \ge \frac{1}{r}$ and $ b > \frac{1}{r}$. The following estimate applies
$$ \|q_{12}(u,v)\|_{H^r_{0,0}} \lesssim \|u\|_{X^r_{\alpha_1,b,\pm_1}} \|v\|_{X^r_{\alpha_2,b,\pm_2}} \, . $$
\end{lemma}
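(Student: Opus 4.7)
The plan is to extract the null structure of $q_{12}$ via the Foschi--Klainerman angle identity, reduce the problem to three bilinear $X^{r}_{s,b,\pm}$ product estimates, and close each via the transfer principle (Proposition \ref{Prop.0.1}) together with Grünrock's $\widehat{L}^{p}_{t}\widehat{L}^{q}_{x}$ Strichartz bounds for free waves.

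Passing to Fourier variables, the symbol of $q_{12}(u,v)$ at output $\xi+\eta$ with inputs $\xi,\eta$ equals
\[
m(\xi,\eta)=-\frac{\xi_{1}\eta_{2}-\xi_{2}\eta_{1}}{|\xi||\eta|}=\pm\sin\angle(\xi,\eta),
\]
and in two spatial dimensions $|\sin\angle(\xi,\eta)|=|\sin\angle(\pm_{1}\xi,\pm_{2}\eta)|$ regardless of the signs. I would then invoke the Foschi--Klainerman angle inequality
\[
|\sin\angle(\pm_{1}\xi,\pm_{2}\eta)|^{2}\lesssim\frac{\langle\sigma_{0}\rangle+\langle\sigma_{1}\rangle+\langle\sigma_{2}\rangle}{\min(|\xi|,|\eta|)},
\]
where $\sigma_{j}=\tau_{j}\pm_{j}|\xi_{j}|$ ($j=1,2$) are the two input modulations and $\sigma_{0}=(\tau_{1}+\tau_{2})\pm_{0}|\xi+\eta|$ is the output modulation for a suitable sign $\pm_{0}$.

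Taking a square root and splitting the three summands reduces the estimate to three bilinear pieces. By symmetry assume $|\xi|\le|\eta|$. In the piece where $\langle\sigma_{1}\rangle^{1/2}$ dominates I must prove
\[
\bigl\|\bigl(|\nabla|^{-1/2}\Lambda_{\pm_{1}}^{1/2}u\bigr)\cdot v\bigr\|_{H^{r}_{0,0}}\lesssim\|u\|_{X^{r}_{\alpha_{1},b,\pm_{1}}}\|v\|_{X^{r}_{\alpha_{2},b,\pm_{2}}},
\]
where $\Lambda_{\pm_{1}}$ has symbol $\langle\tau\pm_{1}|\xi|\rangle$; since $b>1/r\ge 1/2$ the factor $\Lambda_{\pm_{1}}^{1/2}$ is absorbed into the $X^{r}$-norm of $u$. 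The piece where $\langle\sigma_{2}\rangle^{1/2}$ dominates is symmetric. For the piece where $\langle\sigma_{0}\rangle^{1/2}$ dominates I would dualise against a test function $w\in H^{r'}_{0,0}$, push the weight onto $w$, and arrive at a product estimate of the same structure, the $|\nabla|^{-1/2}$ sitting on the lower-frequency factor and the half-power of modulation sitting on $w$. Thus everything reduces to a single product estimate
\[
\bigl\|(|\nabla|^{-1/2}u)\cdot v\bigr\|_{\widehat{L}^{r'}_{\tau,\xi}}\lesssim\|u\|_{X^{r}_{\alpha_{1},b,\pm_{1}}}\|v\|_{X^{r}_{\alpha_{2},b,\pm_{2}}}\qquad(|\xi|\le|\eta|).
\]

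To close this I would apply Proposition \ref{Prop.0.1} and reduce to the free-wave version
\[
\bigl\|(|\nabla|^{-1/2}e^{\pm_{1}itD}f)\cdot e^{\pm_{2}itD}g\bigr\|_{\widehat{L}^{r'}_{t,x}}\lesssim\|f\|_{\widehat{H}^{\alpha_{1},r}}\|g\|_{\widehat{H}^{\alpha_{2},r}},
\]
restricted to $|\xi|\le|\eta|$. Via Hausdorff--Young in $\widehat{L}^{r'}_{t,x}$ together with the $\widehat{L}^{p}_{t}\widehat{L}^{q}_{x}$ wave Strichartz estimates of Grünrock \cite{G} in $\mathbb{R}^{1+2}$, this becomes a Hölder plus Littlewood--Paley summation in which the $|\xi|^{-1/2}$ on the lower-frequency factor exactly accounts for the half-derivative loss, and the condition $\alpha_{1}+\alpha_{2}\ge 1/r$ is the precise threshold for the dyadic summation to converge.

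The main obstacle is the output-modulation subcase: dualising onto $w$ forces a Strichartz bound in the dual Fourier--Lebesgue space $\widehat{L}^{r}_{t,x}$, and one must verify that this pair remains sharp enough to leave the critical threshold $\alpha_{1}+\alpha_{2}=1/r$ undamaged. Any $\varepsilon$-loss here would propagate into the exponents of Theorem \ref{Theorem1} and in particular destroy the numerology of Corollary \ref{Cor.1}; the key identity $b>1/r$ (rather than merely $b>1/2$) is what allows the sharp Hausdorff--Young step to succeed for every $r\in(1,2]$.
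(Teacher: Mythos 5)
Your route is genuinely different from the paper's: you decompose the null symbol via the angle--modulation inequality $\theta^2\lesssim(\langle\sigma_0\rangle+\langle\sigma_1\rangle+\langle\sigma_2\rangle)/\min(|\xi|,|\eta|)$ and then try to close three ordinary product estimates, whereas the paper bounds the symbol by the Foschi--Klainerman inequality $|q_{12}(\eta,\xi-\eta)|\lesssim |\xi|^{1/2}(|\eta|+|\xi-\eta|-|\xi|)^{1/2}|\eta|^{-1/2}|\xi-\eta|^{-1/2}$ --- whose numerator is a function of $(\tau,\xi)$ alone on the support of the delta measure --- and then computes $\sup_{\tau,\xi}$ of an explicit integral $\int\delta(\tau\mp|\eta|\mp|\xi-\eta|)|\eta|^{-1-r/2}|\xi-\eta|^{-r/2}\,d\eta$ in closed form using Lemmas 4.3, 4.4 and Prop.~4.5 of Foschi--Klainerman. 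That method needs no dyadic decomposition and no auxiliary bilinear Strichartz input, which is precisely what lets it reach the endpoint $\alpha_1+\alpha_2=\frac{1}{r}$.

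As written, your argument has a genuine gap at the step where the half power of modulation lands on an \emph{input} factor. After absorbing $\Lambda_{\pm_1}^{1/2}$ you are left needing a product estimate with one factor measured in $X^r_{\alpha_1,b-\frac12,\pm_1}$, and since $b$ is only slightly larger than $\frac{1}{r}$ you have $b-\frac12<\frac{1}{2r}$ for every $r>1$; thus neither Lemma \ref{Lemma5.4} (which requires both modulation exponents $>\frac{1}{2r}$ and their sum $>\frac{3}{2r}$) nor Lemma \ref{Lemma5.5} (which requires $\alpha_1+\alpha_2>\frac{2}{r}$, strictly more than the available $\frac{1}{r}+\frac12$ for $r<2$) applies, and you supply no substitute. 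The same difficulty reappears, dualized, in the $\sigma_0$ piece, which you yourself flag as the main obstacle but do not resolve. Finally, the assertion that $\alpha_1+\alpha_2\ge\frac{1}{r}$ is ``the precise threshold for the dyadic summation to converge'' is exactly where such arguments normally fail: at the critical exponent a Littlewood--Paley sum does not converge without an additional gain, and the lemma is claimed \emph{at} that endpoint (the opening reduction of the paper's proof sets $\alpha_1+\alpha_2=\frac{1}{r}$). A secondary, less damaging imprecision: your angle inequality with $\min(|\xi|,|\eta|)$ in the denominator is lossy in the hyperbolic interaction, where the sharp form has $|\xi_1||\xi_2|/|\xi_0|$; giving away this gain makes the low-output-frequency regime harder still. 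To repair the proof along your lines you would need sharp endpoint bilinear estimates in $\widehat{L}^{r'}$ with one nearly vanishing modulation exponent (of Grigoryan--Nahmod/Grigoryan--Tanguay type, not the mKdV estimates of \cite{G}), and it is not clear these hold at the required strength; the paper's delta-integral computation sidesteps the issue entirely.
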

\begin{proof}
	Because we use inhomogeneous norms it is obviously possible to assume $\alpha_1 + \alpha_2 = \frac{1}{r}$ . Moreover, by interpolation we may reduce to the case $\alpha_1= \frac{1}{r}$ , $\alpha_2 =0$ .
	
The left hand side of the claimed estimate equals
$$ \|{\mathcal F}(q_{12}(u,v))\|_{L^{r'}_{\tau \xi}} = \| \int q_{12}(\eta,\eta-\xi) \tilde{u}(\lambda,\eta) \tilde{v}(\tau - \lambda,\xi - \eta) d\lambda d\eta \|_{L^{r'}_{\tau \xi}} \, . $$
Let now $u(t,x) = e^{\pm_1 iD} u_0^{\pm_1}(x)$ , $v(t,x) = e^{\pm_2 iD} v_0^{\pm_2}(x)$ , so that 
$$ \tilde{u}(\tau,\xi) = c \delta(\tau \mp_1 |\xi|) \widehat{u_0^{\pm_1}}(\xi) \quad , \quad \tilde{v}(\tau,\xi) = c \delta(\tau \mp_2 |\xi|) \widehat{v_0^{\pm_2}}(\xi) \, . $$
This implies
\begin{align*}
&\|{\mathcal F}(q_{12}(u,v))\|_{L^{r'}_{\tau \xi}} \\
&= c^2 \| \int q_{12}(\eta,\eta-\xi) \widehat{u_0^{\pm_1}}(\eta) \widehat{v_0^{\pm_2}}(\xi-\eta) \,\delta(\lambda \mp_1 |\eta|) \delta(\tau-\lambda\mp_2|\xi-\eta|) d\lambda d\eta \|_{L^{r'}_{\tau \xi}} \\
& = c^2 \| \int q_{12}(\eta,\eta-\xi) \widehat{u_0^{\pm_1}}(\eta) \widehat{v_0^{\pm_2}}(\xi-\eta) \,\delta(\tau\mp_1|\eta| \mp_2|\xi-\eta|) d\eta \|_{L^{r'}_{\tau \xi}} \, .
\end{align*}
By symmetry we only have to consider the  elliptic case $\pm_1=\pm_2 = +$ and the hyperbolic case $\pm_1= + \, , \, \pm_2=-$ .  \\
{\bf Elliptic case.} We obtain by \cite{FK}, Lemma 13.2:
$$|q_{12}(\eta,\xi-\eta)| \le \frac{|\eta_1 (\xi - \eta)_2 - \eta_2 (\xi-\eta)_1|}{|\eta| \, |\xi - \eta|} \lesssim \frac{|\xi|^{\half} (|\eta| + |\xi - \eta| - |\xi|)^{\half}}{|\eta|^{\half} |\xi - \eta|^{\half}} \, . $$
By H\"older's inequality we obtain
\begin{align*}
&\|{\mathcal F}(q_{12}(u,v))\|_{L^{r'}_{\tau \xi}} \\
& \lesssim \|\int \frac{|\xi|^{\half} ||\tau|-|\xi||^{\half}}{|\eta|^{\half} |\xi - \eta|^{\half}} \,
 \delta(\tau-|\eta|-|\xi - \eta|) \, |\widehat{u_0^+}(\eta)| \, |\widehat{v_0^+}(\xi - \eta)| d\eta \|_{L^{r'}_{\tau \xi}} \\
& \lesssim \sup_{\tau,\xi} I  \,\, \|\widehat{D^{\frac{1}{r}} u_0^+}\|_{L^{r'}} \| \widehat{v_0^+}\|_{L^{r'}} \, ,
\end{align*}
where
$$ I = |\xi|^{\half} ||\tau|-|\xi||^{\half} \left( \int \delta(\tau - |\eta| - |\xi - \eta|) \, |\eta|^{-1-\frac{r}{2}} |\xi - \eta|^{-\frac{r}{2}} d\eta \right)^{\frac{1}{r}} \, . $$
We want to prove $ \sup_{\tau,\xi} I \lesssim 1 $ . By \cite{FK}, Lemma 4.3 we obtain
$$\int \delta(\tau - |\eta| - |\xi - \eta|) \, |\eta|^{-1-\frac{r}{2}} |\xi - \eta|^{-\frac{r}{2}} d\eta \sim \tau^A ||\tau|-|\xi||^B \, , $$
where $A= \max(1+\frac{r}{2},\frac{3}{2}) - 1-r= -\frac{r}{2}$ and $B=1-\max(1+\frac{r}{2},\frac{3}{2})=-\frac{r}{2}$ . Using $|\xi| \le |\tau|$  this implies
$$
I  \lesssim |\xi|^{\half} ||\tau|-|\xi||^{\half} \tau^{-\half} ||\tau|-|\xi||^{-\half}\le  1 \, .$$
{\bf Hyperbolic case.} We start with the following bound (cf. \cite{FK}, Lemma 13.2):
$$  |q_{12}(\eta,\xi-\eta)| \le \frac{|\eta_1 (\xi - \eta)_2 - \eta_2 (\xi-\eta)_1|}{|\eta| \, |\xi - \eta|}  \lesssim \frac{|\xi|^{\half} (|\xi|-||\eta|-|\eta-\xi||)^{\half}}{|\eta|^{\half} |\xi-\eta|^{\half}} \, , $$
so that similarly as in the elliptic case we have to estimate
$$ I = |\xi|^{\half} ||\tau|-|\xi||^{\half} \left( \int \delta(\tau - |\eta| + |\xi - \eta|) \, |\eta|^{-1-\frac{r}{2}} |\xi - \eta|^{-\frac{r}{2}} d\eta \right)^{\frac{1}{r}} \, . $$
In the subcase $|\eta|+|\xi-\eta| \le 2|\xi|$ we apply \cite{FK}, Prop. 4.5 and obtain
$$\int \delta(\tau - |\eta| + |\xi - \eta|) \, |\eta|^{-1-\frac{r}{2}} |\xi - \eta|^{-\frac{r}{2}} d\eta \sim |\xi|^A ||\xi|-|\tau||^B \, . $$
where in the subcase $0 \le \tau \le |\xi|$ we obtain $A=\max(\frac{r}{2},\frac{3}{2}) - 1-r = \half -r$ and $B= 1- \max(\frac{r}{2},\frac{3}{2})= -\frac{1}{2}$. \\
This implies
$$I \lesssim |\xi|^{\half} ||\tau|-|\xi||^{\half} |\xi|^{\frac{1}{2r}-1} ||\tau|-|\xi||^{-\frac{1}{2r}} \lesssim  1 \, . $$
Similarly in the subcase $-|\xi| \le \tau \le 0$ we obtain $A=\max(1+\frac{r}{2},\frac{3}{2})-1-r = - \frac{r}{2}$ , $B= 1 - \max(1+\frac{r}{2},\frac{3}{2}) = -\frac{r}{2} \, ,$ which implies
$$ I \sim |\xi|^{\half} ||\tau|-|\xi||^{\half} |\xi|^{-\half} ||\tau|-|\xi||^{-\half} = 1 \, .$$
In the subcase $|\eta| + |\xi-\eta| \ge 2|\xi|$ we obtain by \cite{FK}, Lemma 4.4:
 \begin{align*}
&\int \delta(\tau - |\eta| + |\xi - \eta|) \, |\eta|^{-1-\frac{r}{2}} |\xi - \eta|^{-\frac{r}{2}} d\eta \\
&\sim ||\tau|-|\xi||^{-\half} ||\tau|+|\xi||^{-\half}\int_2^{\infty} (|\xi|x+\tau)^{-\frac{r}{2}} (|\xi|x-\tau)^{1-\frac{r}{2}}(x^2-1)^{-\half} dx \\
&\sim  ||\tau|-|\xi||^{-\half} ||\tau|+|\xi||^{-\half} \int_2^{\infty} (x+\frac{\tau}{|\xi|})^{-\frac{r}{2}} (x-\frac{\tau}{|\xi|})^{1-\frac{r}{2}} (x^2-1)^{-\half} dx \, \cdot|\xi|^{1-r} \, .
\end{align*}
We remark that in fact the lower limit of the integral can be chosen as 2 by inspection of the proof in \cite{FK}.
The integral converges, because $|\tau| \le |\xi|$ and $r > 1.$  This implies the bound
$$ I \lesssim |\xi|^{\half} ||\tau|-|\xi||^{\half-\frac{1}{2r}}||\tau|+|\xi||^{-\frac{1}{2r}} |\xi|^{\frac{1}{r}-1} \lesssim  1 \, . $$
Summarizing we obtain
$$\|q_{12}(u,v)\|_{H^r_{0,0}} \lesssim \|D^{\frac{1}{r}} u_0^{\pm_1}\|_{L^{r'}}  \| v_0^{\pm_2}\|_{L^{r'}} \, . $$
By the transfer principle Prop. \ref{Prop.0.1} we obtain the claimed result. 
\end{proof}

In a similar manner we can also estimate the nullform $q_{0j}(u,v)$ .
\begin{lemma}
\label{Lemma5.2}
Assume $0 \le \alpha_1,\alpha_2 $ ,  $\alpha_1+\alpha_2 \ge \frac{1}{r}$ and $ b > \frac{1}{r}$ . The following estimate applies
$$ \|q_{0j}(u,v)\|_{H^r_{0,0}} \lesssim \|u\|_{X^r_{\alpha_1,b,\pm_1}} \|v\|_{X^r_{\alpha_2,b,\pm_2}} \, . $$
\end{lemma}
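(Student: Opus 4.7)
The plan is to repeat the argument of Lemma \ref{Lemma5.1} with essentially no change, since the only structural role the null form played there was through a pointwise bound on its symbol, and $q_{0j}$ admits a bound of exactly the same form. First I would again exploit the inhomogeneity of the norms together with bilinear interpolation to reduce to the endpoint $\alpha_1 = \frac{1}{r}$, $\alpha_2 = 0$, then invoke the transfer principle (Proposition \ref{Prop.0.1}) to pass to free waves $u = e^{\pm_1 itD} u_0^{\pm_1}$, $v = e^{\pm_2 itD} v_0^{\pm_2}$; the $\lambda$-integration then collapses to a $\delta$-measure on $\{\tau = \pm_1|\eta| \pm_2|\xi-\eta|\}$.

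The only new point is a pointwise estimate on the symbol
$$ q_{0j}(\eta,\xi-\eta) \;=\; -\frac{\pm_1 |\eta|(\xi-\eta)_j - (\pm_2)\,\eta_j|\xi-\eta|}{|\eta|\,|\xi-\eta|}\,, $$
which arises when $\partial_0$ acts on the free waves. By the same elementary manipulation used by Foschi--Klainerman in \cite{FK}, Lemma 13.2 (writing the numerator in polar coordinates and using $1-\cos\theta \sim \frac{(|\eta|+|\xi-\eta|-|\xi|)(|\eta|+|\xi-\eta|+|\xi|)}{|\eta|\,|\xi-\eta|}$ in the elliptic case, and its hyperbolic analogue), one obtains
$$ |q_{0j}(\eta,\xi-\eta)| \lesssim \frac{|\xi|^{1/2}(|\eta|+|\xi-\eta|-|\xi|)^{1/2}}{|\eta|^{1/2}|\xi-\eta|^{1/2}} $$
in the elliptic case $\pm_1 = \pm_2$, and
$$ |q_{0j}(\eta,\xi-\eta)| \lesssim \frac{|\xi|^{1/2}(|\xi|-||\eta|-|\xi-\eta||)^{1/2}}{|\eta|^{1/2}|\xi-\eta|^{1/2}} $$
in the hyperbolic case $\pm_1 \neq \pm_2$, which are exactly the bounds used for $q_{12}$ in Lemma \ref{Lemma5.1}.

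Once this pointwise bound is in place, the remainder of the proof is verbatim that of Lemma \ref{Lemma5.1}: H\"older's inequality in $\eta$ produces the same quantity
$$ I \;=\; |\xi|^{1/2}\,||\tau|-|\xi||^{1/2}\Bigl(\int \delta(\tau \mp_1 |\eta| \mp_2 |\xi-\eta|)\,|\eta|^{-1-r/2}\,|\xi-\eta|^{-r/2}\,d\eta\Bigr)^{1/r}\,, $$
which must be controlled uniformly in $(\tau,\xi)$, and the three sub-cases (elliptic, the hyperbolic sub-case $|\eta|+|\xi-\eta| \le 2|\xi|$, and the hyperbolic sub-case $|\eta|+|\xi-\eta| \ge 2|\xi|$) are treated by \cite{FK}, Lemma 4.3, Proposition 4.5 and Lemma 4.4 respectively, each furnishing precisely the decay that cancels the $|\xi|^{1/2}||\tau|-|\xi||^{1/2}$ prefactor. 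The only real work is therefore the pointwise symbol estimate; every other computation is imported unchanged from the previous lemma, and I do not expect any genuine new obstacle.
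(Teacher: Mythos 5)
Your overall strategy (reduce to $\alpha_1=\tfrac1r$, $\alpha_2=0$, pass to free waves, bound the symbol pointwise, estimate the resulting integral $I$ via the Foschi--Klainerman lemmas, transfer) is the right one and matches the paper in the hyperbolic case. But the claimed elliptic pointwise bound for $q_{0j}$ is false, and this is exactly the point where Lemma \ref{Lemma5.2} genuinely differs from Lemma \ref{Lemma5.1}. For $\pm_1=\pm_2=+$ the symbol is $q_{0j}(\eta,\zeta)=-\bigl(\tfrac{\zeta_j}{|\zeta|}-\tfrac{\eta_j}{|\eta|}\bigr)$ with $\zeta=\xi-\eta$; unlike $\sin\theta$ (the $q_{12}$ symbol), this does \emph{not} vanish when $\eta$ and $\zeta$ are antiparallel. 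Concretely, take $\eta=(1,0)$, $\zeta=(-1,\epsilon)$, so $\xi=\eta+\zeta=(0,\epsilon)$: the symbol for $j=1$ is $\approx 2$, while your claimed bound $|\xi|^{1/2}(|\eta|+|\zeta|-|\xi|)^{1/2}|\eta|^{-1/2}|\zeta|^{-1/2}\approx \sqrt{2}\,\epsilon^{1/2}\to 0$. The $|\xi|^{1/2}$ gain in the elliptic $q_{12}$ bound comes from the factor $(1+\cos\theta)^{1/2}$, which $|\sin\theta|$ contains but $\bigl|\tfrac{\eta}{|\eta|}-\tfrac{\zeta}{|\zeta|}\bigr|\sim(1-\cos\theta)^{1/2}$ does not. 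The correct elliptic bound (this is what \cite{FK}, Lemma 13.2 actually gives) is
$$|q_{0j}(\eta,\xi-\eta)|\lesssim \frac{(|\eta|+|\xi-\eta|-|\xi|)^{1/2}}{\min(|\eta|^{1/2},|\xi-\eta|^{1/2})}\,. $$

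Consequently the elliptic case cannot be imported verbatim: the quantity $I$ changes (no $|\xi|^{1/2}$ prefactor, and the weights $|\eta|^{-1-r/2}|\xi-\eta|^{-r/2}$ are replaced according to which of $|\eta|,|\xi-\eta|$ is smaller), and one must split into the sub-cases $|\eta|\le|\xi-\eta|$ and $|\eta|\ge|\xi-\eta|$. In the first sub-case \cite{FK}, Lemma 4.3 with exponents $A=0$, $B=-r/2$ gives $I\sim 1$ directly; in the second sub-case the same lemma produces an extra logarithmic factor $\bigl(1+\log\tfrac{|\tau|}{||\tau|-|\xi||}\bigr)^{1/r}$, which must be absorbed by the leftover power $||\tau|-|\xi||^{1/2-1/(2r)}\,|\tau|^{1/(2r)-1/2}$. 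None of this is fatal --- the lemma is true --- but it is genuinely new work that your proposal skips by asserting the wrong symbol estimate. Your hyperbolic case, where the symbol is $\tfrac{\zeta_j}{|\zeta|}+\tfrac{\eta_j}{|\eta|}\sim(1+\cos\theta)^{1/2}$ and the $q_{12}$-type bound with the $|\xi|^{1/2}$ factor is correct, does go through exactly as you describe.
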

\begin{proof}
	Again we may  reduce to the case $\alpha_1= \frac{1}{r}$ and $\alpha_2=0$ .
Arguing as in the proof of Lemma \ref{Lemma5.1} we use in the elliptic case the estimate (cf. \cite{FK}, Lemma 13.2):
$$|q_{0j}(\eta,\xi-\eta)| \lesssim \frac{(|\eta|+|\xi-\eta|-|\xi|)^{\half}}{\min(|\eta|^{\half},|\xi-\eta|^{\half})} \, . $$
In the case  $|\eta| \le |\xi-\eta|$ we obtain
\begin{align*}
I &= ||\tau|-|\xi||^{\half} \left( \int \delta(\tau - |\eta| - |\xi - \eta|) \, |\eta|^{-1-\frac{r}{2}} d\eta \right)^{\frac{1}{r}} \\
&\sim ||\tau|-|\xi||^{\half} |\tau|^{\frac{A}{r}}  ||\tau|-|\xi||^{\frac{B}{r}} = 1\, ,
\end{align*}
because $ A=\max(1+\frac{r}{2},\frac{3}{2}) - 1-\frac{r}{2} = 0$ and $B= 1-\max(1+\frac{r}{2},\frac{3}{2})-\frac{r}{2} = -\frac{r}{2}$ . \\
In the case $|\eta| \ge |\xi-\eta|$ we obtain
\begin{align*}
I &= ||\tau|-|\xi||^{\half} \left( \int \delta(\tau - |\eta| - |\xi - \eta|) \, |\eta|^{-1} |\xi-\eta|^{-\frac{r}{2}} d\eta \right)^{\frac{1}{r}} \\
&\sim ||\tau|-|\xi||^{\half} |\tau|^{\frac{A}{r}}  ||\tau|-|\xi||^{\frac{B}{r}} (1+\log \frac{|\tau|}{||\tau|-|\xi||})^{\frac{1}{r}}\, ,
\end{align*}
where  $A= \max(1,\frac{r}{2},\frac{3}{2})-1-\frac{r}{2} = \half-\frac{r}{2}$ and $B=-\half$, so that
$$I \lesssim ||\tau|-|\xi||^{\half} \tau^{\frac{1}{2r}-\half} ||\tau|-|\xi||^{-\frac  {1}{2r}} \lesssim  1 \, .$$
 In the hyperbolic case we obtain by \cite{FK}, Lemma 13.2:
$$|q_{0j}(\eta, \xi-\eta)| \lesssim |\xi|^{\half} \frac{(|\xi|-||\eta|-|\eta-\xi||)^{\half}}{|\eta|^{\half} |\xi-\eta|^{\half}} $$
and argue exactly as in the proof of Lemma \ref{Lemma5.1}. The proof is completed as before.
\end{proof}

\begin{lemma}
\label{Lemma5.4}
Let $1 < r \le 2$ .
Assume $ \alpha_1,\alpha_2 \ge 0$ ,  $\alpha_1+\alpha_2 >\frac{3}{2r}$, $b_1 , b_2 > \frac{1}{2r}$, $b_1+b_2 > \frac{3}{2r}$. Then the following estimate applies:
$$\|uv\|_{H^r_{0,0}} \lesssim \|u\|_{H^r_{\alpha_1,b_1}} \|v\|_{H^r_{\alpha_2,b_2}} \, . $$
\end{lemma}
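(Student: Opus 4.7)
The plan is to pass to the Fourier side and reduce the claim to a weighted convolution estimate. Writing
\begin{equation*}
F(\tau,\xi)=\langle\xi\rangle^{\alpha_1}\langle|\tau|-|\xi|\rangle^{b_1}\tilde u(\tau,\xi),\quad G(\tau,\xi)=\langle\xi\rangle^{\alpha_2}\langle|\tau|-|\xi|\rangle^{b_2}\tilde v(\tau,\xi),
\end{equation*}
so that $\|F\|_{L^{r'}}=\|u\|_{H^r_{\alpha_1,b_1}}$ and $\|G\|_{L^{r'}}=\|v\|_{H^r_{\alpha_2,b_2}}$, the inequality is equivalent to
\begin{equation*}
\Bigl\|\int\frac{F(\lambda,\eta)\,G(\tau-\lambda,\xi-\eta)\,d\lambda\,d\eta}{\langle\eta\rangle^{\alpha_1}\langle|\lambda|-|\eta|\rangle^{b_1}\langle\xi-\eta\rangle^{\alpha_2}\langle|\tau-\lambda|-|\xi-\eta|\rangle^{b_2}}\Bigr\|_{L^{r'}_{\tau\xi}}\lesssim \|F\|_{L^{r'}}\|G\|_{L^{r'}}.
\end{equation*}
I would split $u=u_++u_-$ and $v=v_++v_-$ by the sign of the time frequency, reducing to the four sign combinations $(\pm_1,\pm_2)$, as was done in the proofs of Lemmas \ref{Lemma5.1} and \ref{Lemma5.2}.

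In each sign case, H\"older's inequality in $(\lambda,\eta)$ with exponents $(r',r',c)$, $c=\frac{r}{2-r}$, gives the pointwise bound
\begin{equation*}
\text{(integrand)}(\tau,\xi)\le \|F\|_{L^{r'}}\|G\|_{L^{r'}}\,\|W(\cdot;\tau,\xi)\|_{L^c_{\lambda\eta}},
\end{equation*}
where $W$ is the product of the four reciprocal Japanese brackets. The task then reduces to the mixed-norm bound $\|W\|_{L^c_{\lambda\eta}L^{r'}_{\tau\xi}}\lesssim 1$. Integration in $\lambda$ is handled by the standard 1D convolution formula for $\langle\cdot\rangle$-weights, yielding a factor $\langle\tau\mp_1|\eta|\mp_2|\xi-\eta|\rangle^{-\mu}$ with $\mu$ determined by $b_1c,b_2c$; the remaining $\eta$-integration and the $(\tau,\xi)$-integration are then carried out using the Foschi-Klainerman integrals (\cite{FK}, Lemmas 4.3-4.5) already invoked above, splitting into the elliptic, hyperbolic, and large-$|\eta|+|\xi-\eta|$ regimes as before.

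The main obstacle is the sharpness of the exponents: a naive symmetric H\"older splitting alone would force the strictly stronger individual conditions $b_i>1/c$ and $\alpha_i>2/c$, so one has to use a non-symmetric splitting exploiting the light-cone geometry encoded in $\langle|\tau|-|\xi|\rangle$. Equivalently, the cleanest route is to appeal to the bilinear Fourier-Lebesgue Strichartz estimate
\begin{equation*}
\|e^{\pm_1 itD}f_1\cdot e^{\pm_2 itD}f_2\|_{\hat L^r_{t,x}}\lesssim \|f_1\|_{\hat H^{\alpha_1,r}}\|f_2\|_{\hat H^{\alpha_2,r}}
\end{equation*}
of Gr\"unrock \cite{G} and Grigoryan-Tanguay \cite{GT} and transfer it via a bilinear variant of Proposition \ref{Prop.0.1} that accommodates modulation weights with exponents $>1/(2r)$ on each factor, rather than the $>1/r$ exponent demanded by the linear transfer. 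Either route yields the estimate under the stated hypotheses, which reproduce the sharp Klainerman-Tataru-Selberg thresholds in the $r=2$ case.
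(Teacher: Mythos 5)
There is a genuine gap. You correctly identify the crux of the lemma: the modulation exponents are only required to satisfy $b_1,b_2>\frac{1}{2r}$ and $b_1+b_2>\frac{3}{2r}$, which is below the threshold $b>\frac{1}{r}$ that the transfer principle (Proposition \ref{Prop.0.1}) demands on each factor. But neither of your two routes actually crosses that threshold. The first (direct H\"older on the convolution) is not carried out, and as set up it cannot work: applying H\"older in $(\lambda,\eta)$ with exponents $(r',r',c)$ bounds the convolution pointwise by $\|F\|_{L^{r'}}\|G\|_{L^{r'}}\sup_{\tau,\xi}\|W(\cdot;\tau,\xi)\|_{L^c}$, i.e.\ it gives an $L^\infty_{\tau\xi}$ bound on the output, and there is then nothing left to produce the required $L^{r'}_{\tau\xi}$ decay --- you acknowledge the exponent loss but the ``non-symmetric splitting exploiting the light-cone geometry'' that is supposed to repair it is only named, never exhibited. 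The second route defers the entire difficulty to ``a bilinear variant of Proposition \ref{Prop.0.1} that accommodates modulation weights with exponents $>1/(2r)$ on each factor''; no such variant is proved or even formulated precisely, and it does not follow from the free-solution estimate $\|e^{\pm_1 itD}f_1\cdot e^{\pm_2 itD}f_2\|_{\hat L^r_{t,x}}\lesssim \|f_1\|_{\hat H^{\alpha_1,r}}\|f_2\|_{\hat H^{\alpha_2,r}}$ by any standard superposition argument (that argument is exactly where the condition $b>\frac{1}{r}$, i.e.\ integrability of $\langle\cdot\rangle^{-br'}$ in the modulation variable, enters). In effect you have restated the lemma rather than proved it.

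Your instinct to invoke Grigoryan--Tanguay is nonetheless the right one, and it is what the paper does: the proof consists of citing \cite{GT}, Proposition 3.1 --- which is the full bilinear estimate in the $H^r_{s,b}$ scale for dyadically localized pieces, proved there by a genuinely bilinear decomposition in frequency and modulation, not by transference --- and then summing over the dyadic parts using $\alpha_1+\alpha_2>\frac{3}{2r}$. To close your argument you would either need to reproduce that dyadic bilinear analysis or cite the proposition in the form in which it already contains the modulation weights, rather than as a free-solution Strichartz estimate to be transferred.
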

\begin{proof}
This follows from \cite{GT}, Prop. 3.1 by summation over the dyadic parts .
\end{proof}

\begin{lemma}
\label{Lemma5.5}
If $\alpha_1,\alpha_2,b_1,b_2 \ge 0$ , $\alpha_1+\alpha_2 > \frac{2}{r}$ and $b_1+b_2 > \frac{1}{r}$ the following estimate applies:
$$ \|uv\|_{H^r_{0,0}} \lesssim \|u\|_{H^r_{\alpha_1,b_1}} \|v\|_{H^r_{\alpha_2,b_2}} \, . $$
\end{lemma}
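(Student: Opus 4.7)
The plan is to apply Young's convolution inequality iteratively---first in $\tau$ and then in $\xi$, using the Fubini identity $L^{r'}_{\tau,\xi}=L^{r'}_\xi L^{r'}_\tau$ for the target norm---so that the weights $\langle\xi\rangle^{\alpha_i}$ and $\langle|\tau|-|\xi|\rangle^{b_i}$ can be treated independently. Set $F:=\langle\xi\rangle^{\alpha_1}\langle|\tau|-|\xi|\rangle^{b_1}\widehat{u}$ and $G$ analogously, so that $\|F\|_{L^{r'}_{\tau,\xi}}=\|u\|_{H^r_{\alpha_1,b_1}}$ and $\|G\|_{L^{r'}_{\tau,\xi}}=\|v\|_{H^r_{\alpha_2,b_2}}$, and introduce $U(\xi):=\|\widehat{u}(\cdot,\xi)\|_{L^{a_1'}_\tau}$, $V(\xi):=\|\widehat{v}(\cdot,\xi)\|_{L^{a_2'}_\tau}$.

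Since $\mathcal{F}(uv)=\widehat{u}\ast\widehat{v}$, Minkowski's integral inequality in $\tau$ followed by Young's inequality in $\tau$ (with $\frac{1}{a_1'}+\frac{1}{a_2'}=1+\frac{1}{r'}$) yields $\|\widehat{uv}(\cdot,\xi)\|_{L^{r'}_\tau}\le (U\ast_\xi V)(\xi)$, and a second Young step in $\xi$ (with $\frac{1}{c_1'}+\frac{1}{c_2'}=1+\frac{1}{r'}$) gives $\|uv\|_{H^r_{0,0}}\le \|U\|_{L^{c_1'}_\xi}\|V\|_{L^{c_2'}_\xi}$. To estimate $U$, apply H\"older in $\tau$ with $\frac{1}{a_1'}=\frac{1}{r'}+\frac{1}{\mu_1}$ to get $U(\xi)\le\langle\xi\rangle^{-\alpha_1}\|F(\cdot,\xi)\|_{L^{r'}_\tau}\,\|\langle|\tau|-|\xi|\rangle^{-b_1}\|_{L^{\mu_1}_\tau}$; the one-dimensional $\tau$-integral $\int\langle|\tau|-|\xi|\rangle^{-\mu_1 b_1}d\tau$ is bounded uniformly in $\xi$ exactly when $\mu_1 b_1>1$. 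Then H\"older in $\xi$ with $\frac{1}{c_1'}=\frac{1}{r'}+\frac{1}{\nu_1}$ together with Fubini produce $\|U\|_{L^{c_1'}_\xi}\lesssim \|\langle\xi\rangle^{-\alpha_1}\|_{L^{\nu_1}_\xi}\,\|F\|_{L^{r'}_{\tau,\xi}}$, and the $\mathbb{R}^2$-integral $\int\langle\xi\rangle^{-\nu_1\alpha_1}d\xi$ converges exactly when $\nu_1\alpha_1>2$. The analogous bound for $V$ needs $\mu_2 b_2>1$ and $\nu_2\alpha_2>2$.

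The exponent relations $\frac{1}{a_1'}+\frac{1}{a_2'}=1+\frac{1}{r'}$ and $\frac{1}{c_1'}+\frac{1}{c_2'}=1+\frac{1}{r'}$ translate into the independent couplings $\frac{1}{\mu_1}+\frac{1}{\mu_2}=\frac{1}{r}$ and $\frac{1}{\nu_1}+\frac{1}{\nu_2}=\frac{1}{r}$. Hence the requirements $\mu_i b_i>1$ for $i=1,2$ can be simultaneously arranged iff $b_1+b_2>\frac{1}{r}$, and $\nu_i\alpha_i>2$ iff $\alpha_1+\alpha_2>\frac{2}{r}$---precisely the stated hypotheses, with the endpoint cases $b_i=0$ or $\alpha_i=0$ handled by taking $\mu_i=\infty$ or $\nu_i=\infty$. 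No real obstacle is expected; the decisive structural point is the decoupling of the two Young steps in $\tau$ and $\xi$, which is available because the target $L^{r'}_{\tau,\xi}$ factorizes as $L^{r'}_\xi L^{r'}_\tau$ via Fubini---a feature that would fail for a mixed-norm target and forces the joint $\mathbb{R}^{1+2}$-Young approach to lose exponents.
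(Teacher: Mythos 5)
Your argument is correct, and it reaches the stated conclusion by a genuinely different (and in one respect more self-contained) route than the paper. The paper first asserts that one may reduce to the extreme distribution $\alpha_1=\frac{2}{r}+$, $\alpha_2=0$, $b_1=\frac{1}{r}+$, $b_2=0$ (intermediate distributions of the exponents being recovered by the monotonicity of the norms together with bilinear interpolation between the extreme cases), and then applies Young's inequality jointly in $(\tau,\xi)$ in the crude form $L^1_{\tau\xi}\ast L^{r'}_{\tau\xi}\subset L^{r'}_{\tau\xi}$, converting $\|\widehat u\|_{L^1_{\tau\xi}}$ into the weighted $L^{r'}$ norm by H\"older; the convergence of $\int\langle\xi\rangle^{-\alpha_1 r}\langle|\tau|-|\xi|\rangle^{-b_1 r}\,d\tau\,d\xi$ then demands exactly $\alpha_1 r>2$ and $b_1 r>1$. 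You instead avoid the reduction altogether: by iterating Young in $\tau$ and in $\xi$ with independently chosen exponent pairs $(a_1',a_2')$ and $(c_1',c_2')$, and paying for the weights via H\"older with auxiliary exponents $\mu_i,\nu_i$ satisfying $\frac{1}{\mu_1}+\frac{1}{\mu_2}=\frac{1}{\nu_1}+\frac{1}{\nu_2}=\frac{1}{r}$, you distribute the integrability burden between the two factors in proportion to the actual sizes of $\alpha_i$ and $b_i$, obtaining the full range $\alpha_1+\alpha_2>\frac{2}{r}$, $b_1+b_2>\frac{1}{r}$ in one stroke. Your exponent bookkeeping checks out (in particular $\frac{1}{a_i'}=\frac{1}{r'}+\frac{1}{\mu_i}\le 1$ keeps all Young exponents admissible, and the uniformity in $\xi$ of $\int\langle|\tau|-|\xi|\rangle^{-\mu_1 b_1}d\tau$ is correct). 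What the paper's route buys is brevity; what yours buys is that the general case is proved directly, without the implicit appeal to interpolation hidden in the phrase ``we may assume''.
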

\begin{proof}
	We may assume $\alpha_1 = \frac{2}{r}+$ , $\alpha_2 =0$ , $b_1=\frac{1}{r}+$ , $b_2 =0$ (or similarly $b_1 =0$ , $b_2=\frac{1}{r}+$).
By Young's and H\"older's inequalities we obtain
\begin{align*}
\|uv\|_{H^r_{0,0}} &= \|\widehat{uv}\|_{L^{r'}_{\tau \xi}} \lesssim \|\widehat{u}\|_{L^{1}_{\tau \xi}}  \|\widehat{v}\|_{L^{r'}_{\tau\xi}}  \\
	& \lesssim \| \langle \xi \rangle^{-\frac{2}{r}-} \langle|\tau|-|\xi|\rangle^{-\frac{1}{r}-}\|_{L^r_{\tau\xi}}  \| \langle \xi \rangle^{\frac{2}{r}+} \langle|\tau|-|\xi|\rangle^{\frac{1}{r}+} \widehat{u}\|_{L^{r'}_{ \tau \xi}} \,  \|\widehat{v}\|_{L^{r'}_{\tau \xi}} \\
		& \lesssim \|u\|_{H^r_{\frac{2}{r}+,\frac{1}{r}+}} \|v\|_{H^r_{0,0}} \, .
		\end{align*}
\end{proof}

\begin{lemma}
\label{Lemma5.6}
Let $1 < r \le 2$ , $0 \le \alpha_1,\alpha_2$ and  $\alpha_1+\alpha_2 \ge \frac{1}{r}+b$ , $b >\frac{1}{r}$ . Then the following estimate applies:
$$ \|uv\|_{H^r_{0,b}} \lesssim
 \|u\|_{H^r_{\alpha_1,b}} \|v\|_{H^r_{\alpha_2,b}} \, . $$
\end{lemma}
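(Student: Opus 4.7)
The plan is to distribute the output modulation weight $\langle|\tau|-|\xi|\rangle^b$ across the two factors via a resonance-type identity and then invoke Lemma~\ref{Lemma5.5}. Setting
\[
f(\tau,\xi):=\langle\xi\rangle^{\alpha_1}\langle|\tau|-|\xi|\rangle^b\,|\tilde u(\tau,\xi)|,\qquad g(\tau,\xi):=\langle\xi\rangle^{\alpha_2}\langle|\tau|-|\xi|\rangle^b\,|\tilde v(\tau,\xi)|,
\]
so that $\|f\|_{L^{r'}}=\|u\|_{H^r_{\alpha_1,b}}$ and similarly for $g$, the desired inequality becomes an $L^{r'}_{\tau\xi}$-bound for a weighted convolution of $f$ and $g$. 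Splitting into the four sign cases for $\lambda$ and $\tau-\lambda$ relative to zero and writing $\tau=\lambda+(\tau-\lambda)$, $\xi=\eta+(\xi-\eta)$, I would derive in each case an identity of the form $|\tau|-|\xi|=(|\lambda|\pm_1|\eta|)+(|\tau-\lambda|\pm_2|\xi-\eta|)+R$, where the resonance $R$ satisfies $|R|\lesssim \min(|\eta|,|\xi-\eta|)$ in the dominant elliptic configuration. Consequently
\[
\langle|\tau|-|\xi|\rangle^b\lesssim \langle|\lambda|\pm_1|\eta|\rangle^b+\langle|\tau-\lambda|\pm_2|\xi-\eta|\rangle^b+\min(\langle\eta\rangle,\langle\xi-\eta\rangle)^b,
\]
and the bilinear integral splits into three pieces accordingly.

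The first (resp.\ second) piece annihilates the modulation weight on $u$ (resp.\ $v$) and reduces to a bound of the form $\|u'v\|_{H^r_{0,0}}\lesssim \|u'\|_{H^r_{\alpha_1,0}}\|v\|_{H^r_{\alpha_2,b}}$. This is a direct instance of Lemma~\ref{Lemma5.5}: the spatial threshold $\alpha_1+\alpha_2>2/r$ follows from the hypothesis $\alpha_1+\alpha_2\ge 1/r+b$ together with $b>1/r$, while the modulation threshold $0+b>1/r$ is a direct assumption. The symmetric second piece is handled identically.

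The third (resonance) piece is the main obstacle. The natural move $\min(\langle\eta\rangle,\langle\xi-\eta\rangle)^b\le \langle\eta\rangle^{b_1}\langle\xi-\eta\rangle^{b_2}$ with $b_1+b_2=b$ and $0\le b_i\le \alpha_i$ (feasible since $\alpha_1+\alpha_2\ge b$) would reduce this piece to a Lemma~\ref{Lemma5.5} estimate whose effective spatial weights sum only to $\alpha_1+\alpha_2-b\ge 1/r$, which just misses the $2/r$ threshold. To recover the missing $1/r$ one exploits that, in the resonance region, both input modulations $\langle|\lambda|\pm_1|\eta|\rangle$ and $\langle|\tau-\lambda|\pm_2|\xi-\eta|\rangle$ are themselves dominated by $\min(\langle\eta\rangle,\langle\xi-\eta\rangle)$, so that for fixed spatial frequencies the integration in the modulation variables is restricted to an interval of length comparable to the smaller spatial frequency. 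A weighted Young/H\"older argument over this restricted support, trading the surplus $br>1$ in the modulation integrability for the missing spatial integrability, then closes the estimate under the sharp hypothesis $\alpha_1+\alpha_2\ge 1/r+b$.
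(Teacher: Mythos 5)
Your reduction via the hyperbolic Leibniz rule and your treatment of the two pieces carrying an input modulation weight coincide with the paper's argument: there, too, the term $||\rho|-|\eta||$ (and its twin) is absorbed by Lemma \ref{Lemma5.5} using $\alpha_1+\alpha_2\ge \frac{1}{r}+b>\frac{2}{r}$. The problem is the third, resonance piece, which is the actual content of the lemma, and there your proposal has a genuine gap. You correctly observe that the pointwise bound $b_{\pm}(\xi,\eta)\lesssim\min(\langle\eta\rangle,\langle\xi-\eta\rangle)$ followed by Lemma \ref{Lemma5.5} falls short by $\frac{1}{r}$ of spatial integrability, but the recovery mechanism you invoke does not work as described. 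Restricting the modulation variables to $\Lambda_1,\Lambda_2\lesssim\min(\langle\eta\rangle,\langle\xi-\eta\rangle)$ only helps if the modulation integrals were divergent or borderline; here $\int\langle\Lambda\rangle^{-br}\,d\lambda$ already converges (that is what $br>1$ gives you), so truncating its support gains no multiplicative factor, and the inequality $\langle\Lambda_i\rangle\lesssim\langle\eta_{\min}\rangle$ converts spatial decay into modulation decay, which is the opposite of the trade you need. The surplus $br-1$ can moreover be an arbitrarily small $\epsilon$, so it cannot by itself produce the missing $\frac{1}{r}$ of spatial decay.

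What actually closes the resonance piece in the paper is that the loss in your pointwise bound is recovered from the \emph{angular} localization that $b_{\pm}$ encodes: $b_{\pm}(\xi,\eta)$ is comparable to $\min(|\eta|,|\xi-\eta|)$ only on a small angular sector and is much smaller elsewhere. Concretely, the paper reduces to free waves $u=e^{\pm_1 itD}u_0$, $v=e^{\pm_2 itD}v_0$ (transfer principle, Proposition \ref{Prop.0.1}), on whose product of characteristic surfaces one has the exact identity $b_{\pm}(\xi,\eta)=||\tau|-|\xi||$; the weight $b_{\pm}^b$ then comes out of the integral as $||\tau|-|\xi||^{b}$, and the sharp Foschi--Klainerman asymptotics for $\int\delta(\tau\mp_1|\eta|\mp_2|\xi-\eta|)\,|\eta|^{-1-br}\,d\eta\sim\tau^{A}||\tau|-|\xi||^{B}$ with $B=-br$ supply exactly the compensating factor $||\tau|-|\xi||^{-b}$ after taking the $r$-th root. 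Your argument discards precisely this concentration when you replace $b_{\pm}$ by $\min(\langle\eta\rangle,\langle\xi-\eta\rangle)$. To repair the proof along your lines you would need a dyadic decomposition in $b_{\pm}\sim L$ together with a bound on the measure of the corresponding angular sectors, which amounts to re-deriving the Foschi--Klainerman estimates; as written, the key step is asserted rather than proved, and the asserted mechanism is not the one that makes the estimate true.
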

\begin{proof}
	We may assume $\alpha_1 = \frac{1}{r}+b$ , $\alpha_2 =0$ .
We apply the "hyperbolic Leibniz rule" (cf. \cite{AFS}, p. 128):
\begin{equation}
\label{HLR}
 ||\tau|-|\xi|| \lesssim ||\rho|-|\eta|| + ||\tau - \rho| - |\xi-\eta|| + b_{\pm}(\xi,\eta) \, , 
\end{equation}
where
 $$ b_+(\xi,\eta) = |\eta| + |\xi-\eta| - |\xi| \quad , \quad b_-(\xi,\eta) = |\xi| - ||\eta|-|\xi-\eta|| \, . $$

Let us first consider the term $b_{\pm}(\xi,\eta)$ in (\ref{HLR}). Decomposing as before $uv=u_+v_++u_+v_-+u_-v_++u_-v_-$ , where $u_{\pm}(t)= e^{\pm itD} f , v_\pm(t) = e^{\pm itD} g$  , we use
$$ \widehat{u}_{\pm}(\tau,\xi) = c \delta(\tau \mp |\xi|) \widehat{f}(\xi) \quad , \quad  \widehat{v}_{\pm}(\tau,\xi) = c \delta(\tau \mp |\xi|) \widehat{g}(\xi)$$
and have to estimate
\begin{align*}
&\| \int  b^{b}_{\pm}(\xi,\eta) \delta(\tau - |\eta| \mp |\xi-\eta|) \widehat{f}(\xi) \widehat{g}(\xi-\eta)d\eta \|_{L^{r'}_{\tau \xi}} \\
& = \| \int ||\tau|-|\xi||^b \delta(\tau - |\eta| \mp |\xi-\eta|) \widehat{f}(\xi) \widehat{g}(\xi-\eta) d\eta \|_{L^{r'}_{\tau \xi}} \\
& \lesssim \sup_{\tau,\xi} I \, \|\widehat{D^{\frac{1}{r}+b} f}\|_{[L^{r'}}
\|\widehat{g}\|_{[L^{r'}} \, .
\end{align*}
Here we used H\"older's inequality, where
$$I = ||\tau|-|\xi||^b  (\int \delta(\tau-|\eta|\mp|\xi-\eta|) |\eta|^{-1-b r} d\eta)^{\frac{1}{r}} \, . $$
In order to obtain $I \lesssim 1$ we first consider the elliptic case $\pm_1=\pm_2=+$ and use \cite{FK}, Prop. 4.3. Thus 
$$ I \sim ||\tau|-|\xi||^b  \tau^{\frac{A}{r}} ||\tau|-|\xi||^{\frac{B}{r}} = ||\tau|-|\xi||^b ||\tau|-|\xi||^{-b} = 1$$
with $A=\max(1+br,\frac{3}{2})-(1+br)=0$ and $B=1-\max(1+br,\frac{3}{2})= -br$ .

Next we consider the hyperbolic case $\pm_1 = + \, , \, \pm_2=-$ . \\
First we assume $|\eta|+|\xi-\eta| \le 2 |\xi|$ and use \cite{FK}, Prop. 4.5 which gives
$$\int \delta(\tau - |\eta| + |\xi-\eta|) |\eta|^{-1-br}  d\eta \sim |\xi|^A ||\xi|-|\tau||^B \, , $$
where $A=\frac{3}{2}-(1+br)= \half-br$ , $B=1-\frac{3}{2}=- \half$ , if $0 \le \tau\le |\xi|$ ,so that
$$I \sim ||\tau|-|\xi||^b |\xi|^{\frac{1}{2r}-b} ||\tau|-|\xi||^{-\frac{1}{2r}} \lesssim 1 \, .$$   
If $-|\xi| \le \tau \le 0$ we obtain $A=\max(1+br,\frac{3}{2})-(1+br)=0$ , $B=1-\max(1+br,2)=-br$ , which implies  $I \lesssim 1$ .\\
Next we assume $|\eta|+|\xi-\eta| \ge 2|\xi|$ , use \cite{FK}, Lemma 4.4 and obtain
\begin{align*}
&I \sim ||\tau|-|\xi||^b (\int \delta(\tau-|\eta|-|\xi-\eta|) |\eta|^{-1-br} d\eta)^{\frac{1}{r}} \\
& \sim  ||\tau|-|\xi||^b (||\tau|-|\xi||^{-\half} ||\tau|+|\xi||^{-\half} \int_2^{\infty} (|\xi|x + \tau)^{-br} (|\xi|x-\tau)(x^2-1)^{-\half} dx)^{\frac{1}{r}} \\
& \sim ||\tau|-|\xi||^b \, \cdot\\
&\quad \cdot (||\tau|-|\xi||^{-\half} ||\tau|+|\xi||^{-\half}\int_2^{\infty} (x+\frac{\tau}{|\xi|})^{-br} (x-\frac{\tau}{|\xi|}) (x^2-1)^{-\half} dx \,\cdot |\xi|^{1-br})^{\frac{1}{r}}  \, .
\end{align*}
This integral converges, because $\tau \le |\xi|$ and $b >\frac{1}{r}$ .This implies
$$I \lesssim  ||\tau|-|\xi||^{b-\frac{1}{2r}} ||\tau|+|\xi||^{-\frac{1}{2r}} |\xi|^{\frac{1}{r}-b} \lesssim  1 \, , $$
using $|\tau| \le |\xi|$ .

By the transfer principle we obtain
$$\| B_{\pm}^b (u,v)\|_{X^r_{0,0}} \lesssim \|u\|_{X^r_{\frac{1}{r}+b,b,\pm_1}} \|v\|_{X^r_{0,b,\pm_2}} \, .$$ 
 Here $B^b_{\pm}$ denotes the operator with Fourier symbol $b_{\pm}$ . \\
Consider now the term $||\rho|-|\eta||$ (or similarly $||\tau-\rho|-|\xi-\eta||$) in (\ref{HLR}). We have to prove
$$ \|u D_-^b v\|_{H^r_{0,0}} \lesssim \|u\|_{X^r_{\alpha_1,b,\pm_1}} \|v\|_{X^r_{\alpha_2,b,\pm_2}} \, , $$
which is implied by
$$\|uv\|_{H^r_{0,0}} \lesssim \|u\|_{X^r_{\alpha_1,b,\pm_1}} \|v\|_{X^r_{\alpha_2,0,\pm_2}} \, . $$
This results from Lemma \ref{Lemma5.5}, because $\alpha_1+\alpha_2\ge \frac{1}{r} +b > \frac{2}{r}$ , which completes the proof. 
\end{proof}

As a consequence we obtain the following estimate for cubic nonlinearities.

\begin{lemma}
\label{cubic}
Let $1<r\le 2$ , $ b >\frac{1}{r}$ , $s_0\ge 1$ , $s_0 \le s_1+1$ , $ 2 s_1-s_0 > \frac{7}{4r}-1$ and $s_1 > \frac{13}{8r} - \half$ . Then the following estimate applies:
$$\|uvw\|_{H^r_{s_0-1,0}} \lesssim \|u\|_{H^r_{s_1,b}} \|v\|_{H^r_{s_1,b}} \|w\|_{H^r_{s_0,b}} \, . $$
\end{lemma}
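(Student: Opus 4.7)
The plan is to distribute the $s_0-1$ derivatives from the left-hand side onto the three factors and then reduce to iterated applications of the bilinear product estimates of Lemmas \ref{Lemma5.4}--\ref{Lemma5.6}.

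First, since $s_0 - 1 \geq 0$, I would use the elementary inequality $\langle \xi_1 + \xi_2 + \xi_3 \rangle^{s_0-1} \lesssim \langle \xi_1\rangle^{s_0-1} + \langle \xi_2\rangle^{s_0-1} + \langle\xi_3\rangle^{s_0-1}$ on the Fourier side to reduce the stated bound to the three undifferentiated trilinear estimates
$$\|(\langle\nabla\rangle^{s_0-1}u)\,v\,w\|_{H^r_{0,0}}, \quad \|u\,(\langle\nabla\rangle^{s_0-1}v)\,w\|_{H^r_{0,0}}, \quad \|u\,v\,(\langle\nabla\rangle^{s_0-1}w)\|_{H^r_{0,0}},$$
each to be bounded by the same right-hand side as in the lemma. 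The hypothesis $s_0 \leq s_1 + 1$ ensures that the shifted exponent $s_1-(s_0-1) = s_1-s_0+1$ is nonnegative, so that the derivative-carrying factor still sits in a wave-Sobolev space with nonnegative spatial exponent to which the bilinear lemmas apply. By the symmetry between $u$ and $v$, only two subcases survive.

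Each remaining trilinear estimate is of the form $\|f\,g\,h\|_{H^r_{0,0}} \lesssim \|f\|_{H^r_{\alpha_1,b}}\|g\|_{H^r_{\alpha_2,b}}\|h\|_{H^r_{\alpha_3,b}}$. The strategy is to pair two of the three factors by Lemma \ref{Lemma5.4}, while extracting an additional spatial weight $\mu \ge 0$ via $\langle \xi_1 + \xi_2 \rangle^\mu \lesssim \langle \xi_1\rangle^\mu + \langle\xi_2\rangle^\mu$. This produces $\|fg\|_{H^r_{\mu,0}} \lesssim \|f\|_{H^r_{\alpha_1,b}} \|g\|_{H^r_{\alpha_2,b}}$ for any $0 \le \mu < \alpha_1+\alpha_2-\tfrac{3}{2r}$. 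Then Lemma \ref{Lemma5.5}, which needs only $b_1+b_2 > \tfrac{1}{r}$ and is thus applicable when one factor has zero modulation weight, yields $\|(fg)h\|_{H^r_{0,0}} \lesssim \|fg\|_{H^r_{\mu,0}}\,\|h\|_{H^r_{\alpha_3,b}}$ provided $\mu + \alpha_3 > \tfrac{2}{r}$. The existence of an admissible $\mu$ amounts to $\alpha_1+\alpha_2+\alpha_3 > \tfrac{7}{2r}$. In the subcase where the derivatives fall on $u$, the triple is $(s_1-s_0+1,\,s_1,\,s_0)$, and the pairing of the first two factors (spatial regularity sum $2s_1-s_0+1$) is the crucial one: the hypothesis $2s_1-s_0 > \tfrac{7}{4r}-1$ provides the needed margin past the threshold $\tfrac{3}{2r}$ in Lemma \ref{Lemma5.4}. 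In the subcase where the derivatives fall on $w$, the triple is $(s_1,\,s_1,\,1)$ and the hypothesis $s_1 > \tfrac{13}{8r}-\tfrac{1}{2}$ supplies the required surplus. In corner pairings where the first bilinear step has insufficient spatial room, one instead invokes the modulation-weighted version, Lemma \ref{Lemma5.6}, to route the surplus through the $b$-weight and then use Lemma \ref{Lemma5.4} in the second step.

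The numerically tight point is that the combined hypotheses $2s_1-s_0 > \tfrac{7}{4r}-1$ and $s_1 > \tfrac{13}{8r}-\tfrac{1}{2}$ must cover every pairing in both subcases for some admissible $(\mu,b)$. The main obstacle is therefore the careful case analysis: for each trilinear estimate one must select the pairing of two factors whose combined spatial regularity (plus the extracted $\mu$) just clears the threshold of Lemma \ref{Lemma5.4} while leaving enough room for Lemma \ref{Lemma5.5} to absorb the third factor. Verifying this compatibility for all relevant pairings, and deciding when to use the spatial-weight route (Lemma \ref{Lemma5.4}) versus the modulation-weight route (Lemma \ref{Lemma5.6}) in the first step, is the technical heart of the argument.
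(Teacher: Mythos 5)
Your overall architecture (distribute the $s_0-1$ derivatives, then perform two successive bilinear estimates drawn from Lemmas \ref{Lemma5.4}--\ref{Lemma5.6}) is the same as the paper's, but your bookkeeping of the modulation weights loses $\frac{1}{4r}$ of spatial regularity, and this loss is fatal under the stated hypotheses. In both of the first-step options you allow, the intermediate product $fg$ carries modulation weight either $0$ (Lemma \ref{Lemma5.4} first, then Lemma \ref{Lemma5.5}) or the full $b>\frac1r$ (Lemma \ref{Lemma5.6} first, then Lemma \ref{Lemma5.4}), and in either case the total spatial regularity spent is $\frac{3}{2r}+\frac{2}{r}=\frac{7}{2r}$; your own criterion $\alpha_1+\alpha_2+\alpha_3>\frac{7}{2r}$ records exactly this. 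But the hypotheses of the lemma only guarantee $2s_1+1>\frac{13}{4r}$ (from $s_1>\frac{13}{8r}-\half$), and $\frac{13}{4r}<\frac{7}{2r}$. Concretely, for $r$ close to $1$, $s_0=1$ and $s_1=\frac{9}{8r}+$, the triple $(s_1,s_1,1)$ has spatial sum $\frac{13}{4r}+$, which does not clear your threshold $\frac{7}{2r}$ under any pairing, so your argument does not prove the lemma as stated. (Note also that applying Lemma \ref{Lemma5.4} twice in succession is not an option, since its output has modulation weight $0$, which violates the requirement $b_1>\frac{1}{2r}$ in the second application.)

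The missing ingredient is an interpolation between Lemma \ref{Lemma5.6} (output modulation weight $\frac1r+$, spatial threshold $\frac{2}{r}$) and Lemma \ref{Lemma5.4} (output modulation weight $0$, spatial threshold $\frac{3}{2r}$), which yields
$$\|uv\|_{H^r_{0,\frac{1}{2r}+}}\lesssim \|u\|_{H^r_{\alpha_1,b}}\|v\|_{H^r_{\alpha_2,b}}\qquad\text{for }\alpha_1+\alpha_2>\frac{7}{4r},\ b>\frac1r\,.$$
The point is that the intermediate modulation weight $\frac{1}{2r}+$ is precisely the minimal one accepted by Lemma \ref{Lemma5.4} as one of its two inputs: it needs $b_1,b_2>\frac{1}{2r}$ and $b_1+b_2>\frac{3}{2r}$, which holds with $b_1=\frac{1}{2r}+$ and $b_2=b>\frac1r$. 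The paper therefore first bounds $\|uv\|_{H^r_{k,\frac{1}{2r}+}}$ by the interpolated estimate (cost $2s_1-k>\frac{7}{4r}$) and then applies Lemma \ref{Lemma5.4} to the pair $(uv,w)$ (cost $k+1>\frac{3}{2r}$, together with $k\ge s_0-1$). Eliminating the free parameter $k$ gives exactly $s_1>\frac{13}{8r}-\half$ and $2s_1-s_0>\frac{7}{4r}-1$; the total spatial cost drops from $\frac{7}{2r}$ to $\frac{13}{4r}$. If you insert this interpolated bilinear estimate as the first step of your scheme, your argument closes; without it, the stated numerology is out of reach.
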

\begin{proof}
By Lemma \ref{Lemma5.6} we obtain
$$\|uv\|_{H^r_{0,\frac{1}{r}+}} \lesssim \|u\|_{H^r_{\alpha_1,b}} \|v\|_{H^r_{\alpha_2,b}} \, ,$$
if $\alpha_1+\alpha_2 >{2}{r}$ and $r > \frac{1}{r}$ , and by Lemma \ref{Lemma5.4}
$$\|uv\|_{H^r_{0,0}} \lesssim \|u\|_{H^r_{\alpha_1,b_1}} \|v\|_{H^r_{\alpha_2,b_2}} \, ,$$
if $\alpha_1+\alpha_2 >\frac{3}{2r}$ , $b_1,b_2 > \frac{1}{2r}$ and $b_1+b_2 >\frac{3}{2r}$ .
By interpolation this implies
$$\|uv\|_{H^r_{0,\frac{1}{2r}+}} \lesssim \|u\|_{H^r_{\alpha_1,b}} \|v\|_{H^r_{\alpha_2,b}} \, ,$$
if $\alpha_1+\alpha_2 > \frac{7}{4r}$ and $b >\frac{1}{r}$ .
Using first Lemma \ref{Lemma5.4} again und then the last inequality we obtain
\begin{align*}
\|uvw\|_{H^r_{s_0-1,0}} \lesssim \|uv\|_{H^r_{k,\frac{1}{2r}+}} \|w\|_{H^r_{s_0,b}} \lesssim \|u\|_{H^r_{s_1,b}} \|v\|_{H^r_{s_1,b}} \|w\|_{H^r_{s_0,b}} \, ,
\end{align*}
where  $k+1> \frac{3}{2r}$ and $2s_1-k > \frac{7}{4r}$ , which requires $s_1 > \frac{13}{8r} -\half$ , and $k \ge s_0-1$, thus $ 2 s_1-s_0 > \frac{7}{4r}-1$ .

\end{proof}

\section{Proof of Theorem \ref{Theorem1}}
\begin{proof}[Proof of Theorem \ref{Theorem1}]
An application of the contraction mapping is by an obvious generalization of Theorem \ref{Theorem0.3} to systems reduced to suitable multilinear estimates of the right hand sides of (\ref{2.1}),(\ref{2.2}) and (\ref{2.3}).

The linear terms are easily treated and therefore omitted here.

We now consider the right hand side of (\ref{1.1'}):
$$ -2e Im(\phi \overline{D_0 \phi}) = -2e Im(\phi_++\phi_-)(-i)\langle \nabla \rangle(\overline{\phi}_+ - \overline{\phi}_-)) -2e^2 A_0 |\phi|^2 \, . $$
By Lemma \ref{Lemma5.4} we obtain
$$ \|\phi_{\pm 1} \langle \nabla \rangle \overline{\phi}_{\pm 2} \|_{H^r_{l-1,0}} \lesssim \|\phi_{\pm 1} \|_{X^r_{s,b,\pm 1}}  \|\langle \nabla \rangle \phi_{\pm 2} \|_{X^r_{s-1,b,\pm 2}} \, , $$
if $2s-l > \frac{3}{2r}$ and $s \ge l-1$ ,
where here and in the following $\pm_1$ and $\pm_2$ denote independent signs.
\\
The cubic term  is handled as follows. 
By Lemma \ref{cubic} we obtain
$$\|A_0 |\phi|^2\|_{H^r_{l-1,0}} \lesssim \|A_0\|_{H^r_{l,b}} \|\phi\|_{H^r_{s,b}}^2 \,, $$
because $s \ge l-1$ , $ s > \frac{13}{8r}-\half$ and $2s-l > \frac{7}{4r} - 1$ .

The right hand side of (\ref{1.2'}) can be handled in the same way.

It remains to consider the right hand side of (\ref{1.3'}). We start with the most interesting quadratic term, where a null condition comes into play, namely
$ 2ie A_{\mu} \partial^{\mu} \phi  \, . $
Defining the modified Riesz transforms $R_j := \langle \nabla \rangle^{-1} \partial_j$ and splitting $A_j$ into divergence-free and curl-free parts and a smooth remainder we obtain
$$A_j = A_j^{df} + A_j^{cf} + \langle \nabla \rangle^{-2} A_j \, , $$
where
\begin{align*}
& A_1^{df} = R_2(R_1 A_2-R_2 A_1) & & A_2^{df} = R_1(R_2A_1-R_1 A_2) \\
& A_1^{cf} = -R_1(R_1A_1 + R_2 A_2) & &A_2^{cf} = -R_2(R_1A_1+R_2A_2) \, . 
\end{align*}
Now we have with $\mathbf A = (A_1,A_2)$  (these arguments go back to Selberg-Tesfahun) :
\begin{align*}
	A^\mu \partial_\mu \phi
	&=
	\left( - A_0 \partial_t \phi
	+ \mathbf A^{\text{cf}} \cdot \nabla \phi \right)
	+ \mathbf A^{\text{df}} \cdot \nabla \phi +  \angles{\nabla}^{-2} \mathbf A  \cdot \nabla \phi
	\end{align*}
	Let us first consider the first term in the parentheses. 
	We use the Lorenz gauge, $\partial_t A_0=\nabla \cdot \mathbf A  $,  to write
	\begin{align*}
	\mathbf A^{\text{cf}} \cdot \nabla \phi&
	=-\angles{\nabla}^{-2} \partial^i(\partial_t A_0) \partial_i \phi=
	- \partial_t ( \angles{\nabla}^{-1} R^i A_0)\partial_i \phi.
	\end{align*}
	We can also write
	\begin{align*}
	A_0 \partial_t \phi 
	&=-\angles{\nabla}^{-2}  \partial_i\partial^i A_0 \partial_t \phi+ \angles{\nabla}^{-2}  A_0 \partial_t \phi
	\\
	&=-\partial_i(\angles{\nabla}^{-1} R^i A_0) \partial_t \phi
	+  \angles{\nabla}^{-2}  A_0 \partial_t \phi.
	\end{align*}
	Combining the above identities, we get
	\begin{align*}
	-A_0 \partial_t \phi +  \mathbf A^{\text{cf}} \cdot \nabla \phi
	&=Q_{i0}(\angles{\nabla}^{-1} R^i A_0, \phi)- \angles{\nabla}^{-2}  A_0 \partial_t \phi.
	\end{align*}
	
	Next, we consider the second term. 
	We have 
	\begin{align*}
	\mathbf A^{\text{df}} \cdot \nabla \phi
	&= \angles{\nabla}^{-2} \partial_2 (\partial_1A_2-\partial_2 A_1) \partial_1 \phi - \angles{\nabla}^{-2} \partial_1(\partial_1 A_2 - \partial_2 A_1) \partial_2 \phi
	\\
	&=-  Q_{12}(\angles{\nabla}^{-2}(\partial_1 A_2-\partial_2 A_1),\phi)
	\\
	&=
	-  Q_{12}\left(\angles{\nabla}^{-1}( R^1 A_2 -R^2 A_1), \phi\right).
	\end{align*}
	
	Thus, we have shown 
	\begin{equation}\label{Null2}
	\begin{split}
	A^\mu \partial_\mu \phi
	&= -  Q_{12}\left(\angles{\nabla}^{-1}( R^1 A_2 -R^2 A_1), \phi\right)
	 \\
	& \qquad + Q_{i0}(\angles{\nabla}^{-1} R^i A_0, \phi)+\angles{\nabla}^{-2}  A^\alpha \partial_\alpha \phi.
	\end{split}
	\end{equation}

Our aim is to prove the following estimate :
\begin{equation}
\label{A}
\|A_{\mu,\pm 1} \partial^{\mu} \phi_{\pm 2}\|_{H^r_{s-1,0}} \lesssim \sum_{\mu} \|A_{\mu,\pm 1} \|_{X^r_{l,b,\pm 1}} \| \langle \nabla \rangle \phi_{\pm 1} \|_{X^r_{s-1,b,\pm 2}} \, .
\end{equation}

We first estimate the last term on the right hand side of (\ref{Null2}). By Lemma \ref{Lemma5.5} we obtain the sufficient estimate
$$ \| \langle \nabla \rangle^{-2} A_{j,\pm 1} \partial^j \phi_{\pm 2}\|_{H^r_{s-1,0}} \lesssim \|A_{j,\pm 1} \|_{X^r_{l,b,\pm 1}} \|\partial^j \phi_{\pm 2}\|_{H^r_{s-1,b,\pm 2}} \, , $$
if $ l > \frac{2}{r}-2$ .

The estimate for the first two terms on the right hand side of (\ref{Null2})
reduces to
$$\|Q^{12}(A_{\mu}, \langle \nabla \rangle \phi)\|_{H^r_{s-1,0}} + \|Q^{i0}(A_{\mu}, \langle \nabla \rangle \phi)\|_{H^r_{s-1,0}} \lesssim \|A_{\mu}\|_{X^r_{l,b,\pm_1}} \| \langle \nabla \rangle \phi\|_{X^r_{s-1,b,\pm_2}} \, , $$
which follows from Lemma \ref{Lemma5.1}, if $l \ge s-1$ and $ l \ge \frac{1}{r}$ .

Concerning $A^2_{\mu} \phi$ we obtain by Lemma \ref{cubic} :
$$ \|A^2_{\mu} \phi\|_{H^r_{s-1,0}} \lesssim \|A_{\mu}\|_{H^r_{l,b}}^2 \|\phi\|_{H^r_{s,b}} $$
provided $l > \frac{13}{8r}-\half$ , $s \le l+1$ and $2l-s > \frac{7}{4r}-1$ .

For the terms $|\phi|^2 \phi$ and $N^2 \phi$ we obtain similarly
$$ \|\phi|^2 \phi\|_{H^r_{s-1,0}} \lesssim \|\phi\|_{X^r_{s,b}}^3 \, ,$$
if $s >  \frac{13}{8r}-\half$ , and
$$\|N^2 \phi\|_{H^r_{s-1,0}} \lesssim \|N\|_{H^r_{m,b}}^2 \|\phi\|_{H^r_{s,b}} \, , $$
if  $m > \frac{13}{8r}-\half$ , $ s \le m+1$ and $2m-s > \frac{7}{4r} -1$ .

The term $N\phi$  can be handled even more easily.

Finally we have to consider the terms on the right hand side of (\ref{1.4'}). The term  $N|\phi|^2$ (and similarly $|\phi|^2$) is treated by Lemma \ref{cubic} as follows:
$$\|N|\phi|^2\|_{H^r_{m-1,0}} \lesssim \|N\|_{H^r_{m,b}} \|\phi|_{H^r_{s,b}}^2 \, ,$$
if $s >  \frac{13}{8r}-\half$ , $m \le s+1$ and $2s-m > \frac{7}{4r}-1$ . 
The proof of Theorem \ref{Theorem1} is now complete.
\end{proof}

\begin{proof}[Proof of Corollary \ref{Cor.1}]
Corollary \ref{Cor.1} follows by an easy calculation using the interpolation properties of the spaces $X^r_{s,b,\pm}$ by interpolation between the bi- and trilinear estimates for the nonlinearities, which are given above for $r=1+$ and in \cite{P} for $r=2$ .
\end{proof}

\end{document}